\newtheorem{theorem}{Theorem}[section]
\newtheorem{lemma}{Lemma}[section]
\theoremstyle{definition}
\newtheorem{definition}{Definition}[section]
\newtheorem{example}{Example}[section]
\newtheorem*{remark}{Remark}
\subjclass[2010]{37F10 (Primary), 30D05, 37F35 (Secondary)}
\begin{document}
%\title{\vspace{-2cm} \large Slow escape in tracts\vspace{-2cm}}
\title{Wiman--Valiron discs and the dimension of Julia sets}
\author{James Waterman}
\address{School of Mathematics and Statistics, The Open University, Walton Hall, Milton Keynes MK7 6AA, UK}
\curraddr{}
\email{james.waterman@open.ac.uk}
\date{}
\begin{abstract}
We show that the Hausdorff dimension of the set of points of bounded orbit in the Julia set of a meromorphic map with a simply connected direct tract and a certain restriction on the singular values is strictly greater than one. This result is obtained by proving new results related to Wiman--Valiron theory.
\end{abstract}
	% Note: when you omit this command, the current dateis automatically included
\maketitle 
\section{Introduction}
	% tells latex to follow your header (e.g., title, author) commands.
Let $f: \mathbb{C} \rightarrow \mathbb{C}$ be a transcendental meromorphic function and denote the $n$th iterate of $f$ by $f^n$ for $n=1,2,3,\ldots$. The \textit{Fatou set} $F(f)$, is defined to be the subset of $\mathbb{C}$ where the iterates $f^n$ of $f$ form a normal family. The \textit{Julia set} is defined to be its complement. Finally, the \textit{escaping set} $I(f)$ is the set of $z \in \mathbb{C}$ for which $f^n(z) \rightarrow \infty$ as $n \rightarrow \infty$ and we denote by $K(f)$ those points in $\mathbb{C}$ with bounded orbit. An introduction to the Fatou set, Julia set, and escaping set and their properties can be found in the survey article of Bergweiler, see \cite{B93}. More information on $K(f)$ can be found in \cite{BergweilerBounded} and \cite{OsborneBounded}.

The Hausdorff dimension of the Julia set, $\dim J(f)$, of transcendental entire and meromorphic functions has been widely studied. Misiurewicz \cite{M81} proved that the Julia set of the exponential function is the entire plane, and hence  $\dim J(f) =2$ in this case. Major work was done by McMullen \cite{McMullen87}, who gave examples for which $\dim J(f)\cap I(f) =2$ but the Julia set is not the entire plane. It was shown by Baker \cite{Baker75} that the Julia set of every transcendental entire function contains a continuum and hence $\dim J(f) \geq 1$. It was further shown by Stallard \cite{StallardDimensions} that for each $d\in (1,2)$ there exists a transcendental entire function for which $\dim J(f) = d$.

%For certain regularly growing entire functions, Bergweiler and Karpi\'nska \cite{BK10} have shown that the Hausdorff dimension of the set of escaping points in the Julia set is $2$. However, not as much is known about the dimension of points in the Julia set which have a bounded orbit.

More is known if one restricts to a specific class of entire functions.   Many results have been proven for functions in the Eremenko--Lyubich class $\mathcal{B}$. For example, if $f$ is a function of finite order in the class $\mathcal{B}$, then $I(f)$ ($\subset J(f)$) has Hausdorff dimension $2$; see \cite{FiniteOrderBaranski} and \cite{FiniteOrderSchubert}.  Stallard \cite{StallardDimensionsB} proved that for all entire functions in the Eremenko--Lyubich class $\mathcal{B}$, $\dim J(f)>1$. This was then improved by Bara\'nski, Karpi\'nska, and Zdunik \cite{BKZ09} who showed that, for a meromorphic function with a logarithmic tract, the Hausdorff dimension of the set of points  in the Julia set with bounded orbit is strictly greater than $1$. While Stallard's result made use of escaping points, Bara\'nski, Karpi\'nska, and Zdunik used completely different methods of proof based on points in $J(f) \cap K(f)$.

Until recently it was an open question as to whether there existed an entire function $f$ for which the Hausdorff dimension of the Julia set of $f$ was equal to $1$. However, Bishop \cite{BishopDim1} has constructed an entire function $f$ with $\dim J(f) = 1$. This function has multiply connected wandering domains, and so its direct tract, defined in \Cref{Wiman Valiron}, does not have an unbounded boundary component. This suggests the question of whether Bara\'nski, Karpi\'nska, and Zdunik's result can be generalized to show that, for a transcendental meromorphic function $f$ with a direct tract with an unbounded boundary component, we have that
$\dim J(f) >1$. We prove this in several  situations, and specifically in the case where the direct tract is simply connected and there is an additional restriction on the singular values associated with the direct tract.
\begin{theorem} \label{DimensionThm}
Let $f$ be a transcendental meromorphic function with a simply connected direct tract $D$.  Suppose that there exists $\lambda>1$ such that for arbitrarily large $r$ there exists an annulus $A({r}/{\lambda}, \lambda r)$ containing no singular  values of the restriction of $f$ to $D$. Then $\dim J(f)  \cap  K(f)  > 1$.
\end{theorem}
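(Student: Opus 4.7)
The plan is to adapt the Bara\'nski--Karpi\'nska--Zdunik proof that a logarithmic tract gives $\dim J(f) \cap K(f) > 1$, replacing the logarithmic change of variables---which is unavailable for a general simply connected direct tract---by a Wiman--Valiron disc: a disc $\Delta \subset D$ on which $f$ is conformal and which $f$ maps, with bounded distortion, onto a set enormously larger than $\Delta$. Given such a disc of the right geometry, the annulus $A(r/\lambda, \lambda r)$ free of singular values will supply many inverse branches with controlled distortion, and the resulting iterated function system will have attractor $\Lambda \subset J(f) \cap K(f)$ with $\dim \Lambda > 1$.

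In the analytic step I would invoke the new Wiman--Valiron results developed earlier in the paper to extract, for each large $r$ satisfying the hypothesis, a point $z_0 \in D$ with $|z_0| \asymp r$ and a disc $\Delta = B(z_0, \rho)$ with $\rho \ll r$ on which $f$ is univalent and whose image $f(\Delta)$ comfortably contains $A(r/\lambda, \lambda r)$. The picture is that near $z_0$ the map $f$ behaves essentially like $z \mapsto z^N$ for a very large central index $N$, so that $\Delta$ is mapped with bounded distortion onto a disc of radius $\asymp |f(z_0)|$. The singular-value hypothesis is what lets me place the target of the IFS inside this image disc without meeting critical or asymptotic values.

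In the combinatorial step I pick a small round disc $V$ compactly contained in $A(r/\lambda, \lambda r)$. Since $f(\Delta) \supset V$ in roughly $N$ sheets and $f|_\Delta$ is univalent, there are $N$ inverse branches $g_i : V \to \Delta$; the absence of singular values on a neighborhood of $\overline{V}$, combined with the Koebe distortion theorem, yields uniform distortion bounds for all the $g_i$. Arranging $z_0$ and $V$ so that some further univalent inverse branch $h$ of $f$ sends $\Delta$ back into $V$ then produces a conformal iterated function system $\{\varphi_i = g_i \circ h\}_{i=1}^{N}$ on $V$ whose attractor $\Lambda$ lies in $J(f)$ and whose $f$-orbits are trapped in $V$, so $\Lambda \subset K(f)$. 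A Bowen/Moran-type calculation gives $\dim \Lambda > 1$ as soon as $\sum_i |\varphi_i'| > 1$, and this holds because the branch count $N$ is much larger than the reciprocal of the typical contraction---the quantitative manifestation of the fact that $f(\Delta)$ is vastly bigger than $\Delta$.

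The main obstacle is not the IFS bookkeeping, which follows the BKZ template closely, but the Wiman--Valiron input of the first step. In the logarithmic-tract case the model map on a half-plane automatically delivers discs of prescribed size and expansion; here one must instead produce conformal discs $\Delta \subset D$ along a suitable sequence of radii with quantitative control on $\rho$, $N$ and $|z_0|$ strong enough both to yield the branches $g_i$ with uniform distortion and to force $\sum_i |\varphi_i'| > 1$. Establishing the requisite univalence and covering statements under only the hypotheses of a simply connected direct tract plus an annulus free of singular values is, I expect, the technical heart of the argument, and is exactly what the new Wiman--Valiron theorems advertised in the abstract must deliver.
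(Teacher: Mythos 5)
Your high-level architecture --- a Wiman--Valiron disc in place of the logarithmic change of variable, a two-stage family of inverse branches forming a conformal IFS, and a pressure estimate $\sum_i|\varphi_i'|>1$ --- matches the paper's, but the concrete geometry in your first two steps is wrong in ways that break the quantitative conclusion. First, $f(\Delta)$ does not contain $A(r/\lambda,\lambda r)$: on the enlarged disc $\Delta=D(z_r,r/a(r,v)^\tau)$ one has $\log f(z)\approx\log f(z_r)+a(r,v)\log(z/z_r)$, so $f(\Delta)$ is an annular region around modulus $M(r)=|f(z_r)|$, which is vastly larger than $\lambda r$. The target of the IFS must therefore sit near modulus $M(r)$, where the hypothesis says nothing about singular values; in the paper the singular-value-free annulus is used only for the \emph{return} branches (via the Eremenko--Lyubich-type expansion estimate of \Cref{expansion}), while the distortion of the branches coming out of $\Delta$ is controlled by the Wiman--Valiron approximation itself. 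Second, ``$f|_\Delta$ is univalent'' and ``$f(\Delta)\supset V$ in $N$ sheets'' are incompatible; in fact $f$ is roughly $a(r,v)^{1-\tau}$-to-one on $\Delta$ (its angular width is $\asymp a(r,v)^{-\tau}$, so only that fraction of the $a(r,v)$ sheets of $z\mapsto z^{a(r,v)}$ is present), and univalence holds only on much smaller pieces, the exponentials of the squares $S_{r,s}$.

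More seriously, with a single return branch $h$ and a round disc $V$ the key inequality fails. Estimating in the Euclidean metric: each $g_i$ has derivative $\asymp r/(a(r,v)M_V)$ where $M_V$ is the modulus of the centre of $V$, there are $\asymp a(r,v)^{1-\tau}$ of them, and $h(\Delta)\subset V$ forces $|h'|\lesssim \operatorname{diam}(V)\,a(r,v)^{\tau}/r$; the product is $\lesssim \operatorname{diam}(V)/(2M_V)\le 1$, so $\sum_i|\varphi_i'|$ never exceeds $1$. The paper escapes this in two ways you have not reproduced: it works in logarithmic coordinates with target a square $Q$ of side $\tfrac14 a(r,v)^{1-\tau}$ about $\log f(z_r)$ --- whose exponential winds around the origin about $a(r,v)^{1-\tau}$ times and is nothing like a round disc --- on which each branch $G_s$ contracts by $\asymp 1/a(r,v)$; and it uses not one but $\asymp a(r,v)^{1-\tau}$ return branches $H_u$, one for each unbounded level curve of $\{|f|=r\}$ crossing $Q$, each with derivative $O(1)$ and each crossing $Q$ in a set of length $\gtrsim a(r,v)^{1-\tau}$. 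The resulting sum is $\asymp a(r,v)^{2(1-\tau)}/a(r,v)=a(r,v)^{1-2\tau}\to\infty$, which is exactly where the enlargement of the Wiman--Valiron disc from $\tau>1/2$ to $\tau<1/2$ (\Cref{WV}, proved by a harmonic-measure argument using the simple connectivity of $D$) is consumed. Your proposal defers to ``the new Wiman--Valiron theorems'' for the analytic input but mis-states what they deliver, and never engages with why the branch count actually beats the contraction; in the setup you describe, it does not.
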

Note that there are relatively few results concerning the set $K(f)$ for transcendental entire functions. Following the proof of \cite{BKZ09}, Bergweiler showed that $\dim J(f) \cap K(f)$ is not greater than one in general. Indeed, he proved in \cite{BergweilerBounded} that  there exist transcendental entire functions for which $\dim K(f)$ can be arbitrarily small. Note that Bergweiler's examples all have direct tracts with no unbounded boundary component.

Our proof of \Cref{DimensionThm} partially follows  the approach in Bara\'nski, Karpi\'nska, and Zdunik \cite{BKZ09}, with some significant differences. The hypotheses in \cite{BKZ09} specify a logarithmic tract, for which one can make use of the logarithmic transform and the resulting expansion estimate in the tract. However, in our case, we no longer have this expansion estimate throughout the direct tract. Instead, our proof uses Wiman--Valiron theory, which gives the existence of a disc in the direct tract, around certain points where the function takes its maximum modulus, within which the function behaves like a polynomial. We use tools developed by Bergweiler, Rippon, and Stallard \cite{Tracts} for general direct tracts, and apply them to direct tracts with certain properties. These results trace back to Wiman--Valiron theory \cite{Hayman74} and Macintyre's theory of flat regions \cite{M38}. We show that in our case a larger disc than that given by  Wiman--Valiron theory lies in the direct tract, and on this disc an estimate of the function holds which is somewhat weaker than the Wiman--Valiron estimate. While this estimate on the new disc is weaker than the Wiman--Valiron estimate, it enables us to cover a much larger annulus than does the Wiman--Valiron estimate many times over. These results are of independent interest.

The paper is organized as follows. \Cref{TractsSec}  introduces the notion of a direct tract and the classification of  singularities. In \Cref{Wiman Valiron}, we give our results on the size of enlarged Wiman--Valiron discs. We then use these results in \Cref{Hausdorff} in order to prove a theorem on the Hausdorff dimension of certain Julia sets, of which \Cref{DimensionThm} is a special case. Finally, in \Cref{Examples} we give two nontrivial examples of transcendental entire functions to which we can apply our results.
\section{Direct tracts and the logarithmic transform}\label{TractsSec}
First, we recall the classification of the singularities of the inverse function due to Iversen \cite{Iversen}, as well as the definition of a tract, following terminology found in \cite{Tracts}. Let $f$ be a meromorphic function and consider $a \in \widehat{\mathbb{C}}$. For $R>0$, let $U_R$ be a component of $f^{-1}(D(a,R))$ (where $D(a,R)$ is the
open disc centered at $a$ with radius $R$ with respect to the spherical metric) chosen so that $R_1< R_2$ implies that $U_{R_1} \subset U_{R_2}$. Then either $\bigcap_{R} U_R = \{z\}$ for some unique $z \in \mathbb{C}$ or $\bigcap_{R} U_R = \emptyset$. 

In the first case, $a=f(z)$ and $a$ is an \textit{ordinary point} if $f'(z) \neq 0$, or $a$ is a \textit{critical value} if $f'(z)=0$. If $f'(z)=0$, we call $z$ a \textit{critical point}. In the second case,  $f$ has a \textit{transcendental singularity} over $a$. The transcendental singularity is called \textit{direct} if $f(z) \neq a$ for all $z \in U_R$, for some $R>0$. Otherwise it is \textit{indirect}. Further, a direct singularity is called \textit{logarithmic} if $f: U_R \rightarrow D(a,R) \setminus \{a\}$ is a universal covering. These components $U_R$ are called \textit{tracts} for $f$. Note that the structure of $U_R$ depends on $R$. We will mainly restrict $f$ to a specific tract, which motivates the following definition; see for example \cite{Tracts}.

\begin{definition}
Let $D$ be an unbounded domain in $\mathbb{C}$ whose boundary consists of piecewise smooth curves and suppose that the complement of $D$ is unbounded. Further, let $f$ be a complex valued function whose domain of definition contains the closure $\overline{D}$ of $D$. Then $D$ is called a \textit{direct tract} of $f$ if $f$ is holomorphic in $D$, continuous in $\overline{D}$, and if there exists $R>0$ such that $|f(z)|=R$ for $z \in \partial D$ while $|f(z)| > R$ for $ z \in D$. If, in addition, the restriction $f:D \rightarrow \{z \in \mathbb{C}: |z| > R\}$ is a universal covering, then $D$ is called a \textit{logarithmic tract}. Further, we call $R$ the \textit{boundary value} of the direct tract.
\end{definition}
Of note is that meromorphic functions with a direct singularity have a direct tract. Further, every transcendental entire function has a direct tract. Although, for a meromorphic function, the boundaries of these tracts are piecewise analytic, the geometry of these tracts can be incredibly varied and sometimes quite wild. Direct tracts need not be simply connected or even have an unbounded boundary component. However, logarithmic tracts are always simply connected and direct tracts in the class $\mathcal{B}$ are logarithmic for a sufficiently large $R$ in the above. 

Logarithmic tracts have been of great use in the iteration of entire and meromorphic functions with many major results and constructions such as \cite{RRRS} and \cite{Tracts} making use of their properties and the additional tools that they give. A technique to identify a logarithmic tract was given in \cite{IDingtracts}.  The major tool used is the logarithmic transform which was first studied in the context of entire functions by Eremenko and Lyubich \cite{EL92}. We now introduce the logarithmic transform, $F$, in the setting of a direct tract with an unbounded boundary component. While the logarithmic transform has been studied in many papers on transcendental entire dynamics, it has  almost always been applied to a logarithmic tract.

Let $f$ be a meromorphic function with a direct tract $D$ with an unbounded complementary component; any simply connected direct tract must have such a complementary component. By  performing a translation if necessary, assume further that $0$ is contained in an unbounded complementary component of $D$. Then, denote by $F$ the logarithmic transform of $f$; that is, ${\exp \circ F = f \circ \exp}$. Note that $F$ is periodic with period $2 \pi i$ and maps each component of $\log D$ into a right half-plane. Unlike for a logarithmic tract, $F$ is not necessarily univalent on each component of  $\log D$. However, as there exists an unbounded complementary component, one may still lift $f$ by the branches of the logarithm.

By applying the Koebe $1/4$-theorem, we are able to obtain an estimate on the entire function  in the direct tract on a domain on which the function is univalent. This result is a modification of an expansion estimate due to Eremenko and Lyubich \cite{EL92} and is similar to the approach of Rippon and Stallard in \cite[Lemma 2.5]{Bakerdomains}.

\begin{lemma}\label{expansion}
Let $f$ be a transcendental meromorphic function with a direct tract $D$, and let $\lambda>1$. Suppose, for some $r>0$, that the restriction of $f$ to $D$ has no singular values in $A(r/\lambda,\lambda r)$ and $ \gamma \subset D$ is a simple unbounded curve on which $|f(z)|=r$. Then, 
\begin{enumerate}[(a)]
\item \label{expansiona}the function $\Phi(z)=\log f(z)$, for $z \in D$, is univalent on a simply connected domain $\Omega$ such that $\gamma \subset \Omega \subset D$, and 
\[\Phi(\Omega)=S=\{w:\log (r/\lambda)<\operatorname{Re} w < \log \lambda r\};\]
\item \label{expansionb}the function $F(w)=\Phi(e^w)=\log f(e^w)$ maps each component of $\log \Omega$ univalently onto $S$;
\item \label{expansionc}any analytic branch $H$ of $F^{-1}:S \rightarrow \log D$ satisfies 
\begin{equation}\label{HDerbound}
|H'(w)|\leq \frac{4 \pi}{\operatorname{dist}(w, \partial S)}, \quad \text{for}~ w \in S,
\end{equation} 
or equivalently
\begin{equation}\label{fDerbound}
\left|\frac{zf'(z)}{f(z)}\right| \geq 
\begin{cases}
\frac{1}{4 \pi}  \log |\lambda r/f(z)|, \quad \text{for}~ |f(z)|\geq r\\
\frac{1}{4 \pi}  \log |\lambda f(z)/r|, \quad \text{for}~ |f(z)|\leq r,
\end{cases}
\end{equation}
for $z\in \Omega$.
\end{enumerate}

\end{lemma}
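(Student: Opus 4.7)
For part (a), I would first let $V$ denote the connected component of $f^{-1}(A(r/\lambda,\lambda r))\cap D$ containing $\gamma$. The no-singular-values hypothesis means that $f|_V:V\to A(r/\lambda,\lambda r)$ is an unbranched covering map: the absence of critical values makes $f|_V$ a local biholomorphism, and the absence of asymptotic values then upgrades this local biholomorphism to a covering. Since an annulus has fundamental group $\mathbb{Z}$, the connected cover $V$ is either the universal cover (simply connected, topologically a strip) or a finite-sheeted cover (topologically an annulus). I would rule out the second case by noting that in it $f|_V$ is proper, so the preimage of the compact circle $\{|w|=r\}$ would be a compact $1$-submanifold whose components are bounded closed curves in $\mathbb{C}$, contradicting the fact that the unbounded simple curve $\gamma$ lies in this preimage. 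Hence $V$ is simply connected, and lifting $f|_V$ through the universal covering $\exp:S\to A(r/\lambda,\lambda r)$ yields a biholomorphism $\Phi=\log f:V\to S$; I would then set $\Omega:=V$. This topological exclusion of the finite-sheeted case is the hardest step in the proof.

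For part (b), I would use the fact (justified by the translation discussed just before the lemma) that $0$ lies in an unbounded complementary component of $D$, so $0\notin\Omega$. Since $\Omega$ is simply connected and avoids $0$, $\exp:\log\Omega\to\Omega$ is a trivial covering, and each component of $\log\Omega$ maps biholomorphically onto $\Omega$ under $\exp$. Restricting $F=\Phi\circ\exp$ to any such component $\tilde\Omega$ then expresses $F$ as the composition of two biholomorphisms, yielding a biholomorphism onto $\Phi(\Omega)=S$.

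For part (c), I would fix $w\in S$, set $d:=\operatorname{dist}(w,\partial S)$, and apply the Koebe $1/4$-theorem to the univalent branch $H=F^{-1}$ on the disc $D(w,d)\subset S$, obtaining $H(D(w,d))\supset D(H(w),d|H'(w)|/4)$. This Koebe disc is contained in $H(S)=\tilde\Omega$, a component of $\log\Omega$ on which $\exp$ is injective, and any open disc inside such a component must have radius at most $\pi$---otherwise it would contain two points at vertical distance exactly $2\pi$, violating injectivity of $\exp$. Hence $d|H'(w)|/4\leq\pi$, giving $|H'(w)|\leq 4\pi/d$. The equivalent derivative-form estimate \eqref{fDerbound} will then follow from the chain-rule identity $F'(\tilde w)=zf'(z)/f(z)$ at $z=e^{\tilde w}\in\Omega$, whence $|H'(w)|=|f(z)|/|zf'(z)|$, combined with the direct identification of $\operatorname{dist}(w,\partial S)$ as $\log|\lambda r/f(z)|$ when $|f(z)|\geq r$ and as $\log|\lambda f(z)/r|$ when $|f(z)|\leq r$.
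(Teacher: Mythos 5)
Your proof is correct and is essentially the paper's argument recast in covering-space language: the paper constructs the inverse branch $\Psi$ on the strip $S$ by analytic continuation and the monodromy theorem (using the absence of singular values exactly as you do), then invokes Nevanlinna's dichotomy that $\Psi$ is either univalent or $2\pi i m$-periodic and rules out the periodic (finite-degree) case via the unbounded curve $\gamma$ --- precisely the role played by your classification of covers of the annulus and your properness argument. Your treatments of (b) and (c) coincide with the paper's, namely the Koebe $1/4$-theorem combined with the observation that a domain on which $\exp$ is injective contains no disc of radius exceeding $\pi$.
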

%\begin{lemma}\label{expansion}
%If there exist $\rho_0$ and $\rho_1$ such that no critical values in $\{z: \rho_0 \leq \operatorname{Re} z\leq \rho_1\}$, then for $z \in U$
%\[\left|F'(z)\right| \geq \frac{1}{4 \pi} \min_{ \rho=\{\rho_0, \rho_1\}}|\operatorname{Re} F(z)-\rho|.\]
%\end{lemma}
\begin{proof}
Take a base point $z_0\in \gamma$ and let $w_0=\log f(z_0)$. Then, let $\Psi$ denote the inverse branch of $ z\mapsto \Phi(z)=\log f(z)$ such that $\Psi(w_0)=z_0$. By our hypothesis on the singular values of $f$, the branch $\Psi$ can be analytically continued along every curve in $S$ starting from $w_0$, with image values in $D$. Since $S$ is simply connected, we deduce from the monodromy theorem that $\Psi$ extends to be analytic throughout $S$, and $\Omega := \Psi(S) \subset D$.

Two cases can then arise \cite[p. 283]{Nevanlinna}: either $\Psi$ is univalent in $S$ or $\Psi$ is periodic in $S$, with period $2 \pi i m$, where $m \in \mathbb{N}$. In the latter case, however, $\Psi(S)$ must be bounded, which is impossible, since $\gamma \subset \Psi(S)=\Omega$.

Now, we use the fact that $H=\log \Psi$, for some branch of the logarithm, and hence $H$ maps $S$ univalently onto a domain that contains no disc of radius greater than $\pi$. Therefore, \eqref{HDerbound} holds by the Koebe $1/4$-theorem, and \eqref{fDerbound} follows by the change of variables $z=e^w$.
\end{proof}

\section{Wiman--Valiron theory in tracts}\label{Wiman Valiron}

First, we introduce some preliminary concepts, discuss the original Wiman--Valiron result of Bergweiler, Rippon, and Stallard, and give some basic lemmas; see \cite{Tracts}. 
%Let $f$ be a meromorphic function with a direct tract $D$ with an unbounded boundary component. Assume further that $0 \notin \overline{D}$. Then, denote by $F$ the log transform of $f$. That is, $\exp \circ F = f \circ \exp$. Note that $F$ is periodic with period $2 \pi i$ and maps $\log D$ onto a right half-plane. 

 Let $v:\mathbb{C} \rightarrow [0,\infty)$ be a non-constant subharmonic function. Then the function
\[B(r,v) = \max_{|z|=r} v(z)\] 
is increasing, convex with respect to $\log r$, and $B(r,v)/\log r\rightarrow \infty$ as $r\rightarrow \infty$. So,
\[a(r,v)= \frac{dB(r,v)}{d\log r}=rB'(r,v)\]
exists except for possibly a countable set of $r$ values and is non-decreasing.

Let $f$ be a meromorphic function with a direct tract $D$ with an unbounded boundary component and boundary value $R$. Define 
\begin{equation}\label{v}
v(z)=\log \frac{|f(z)|}{R},
\end{equation}
 for $z \in D$ and $v(z)=0$ elsewhere; 
then $B(r,v)= \max_{|z|= r} \log |f(z)|/R$.% and $|F'(z_r)|=a(r,v)$, where $z_r$ is the point where the maximum is attained.

It was proved in \cite{Tracts} that any direct tract of $f$ contains a disc about points where the maximum modulus is attained, on which $f$ behaves asymptotically like a polynomial. As we will refer back to this result several times, we state it here.
\begin{theorem}[\cite{Tracts}, Theorem 2.2] \label{BRS}
Let $D$ be a direct tract of $f$ and let $\tau > \frac{1}{2}$. Let $v$ be defined as in \eqref{v} and let $z_r \in D$ be a point satisfying $|z_r|=r$ and $v(z_r)=B(r,v)$. Then there exists a set $E \subset [1, \infty)$ of finite logarithmic measure such that if $r \in  [1, \infty) \setminus E$, then $D(z_r, r/a(r,v)^\tau) \subset D$. Moreover,
\[f(z) \sim \left(\frac{z}{z_r}\right)^{a(r,v)} f(z_r), \quad \text{for} ~ z \in D(z_r, r/ a(r,v)^\tau),\]
as $r \rightarrow \infty$, $r\notin E$.
\end{theorem}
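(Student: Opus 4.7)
The plan is to adapt classical Wiman--Valiron theory from power series on $\mathbb{C}$ to the subharmonic function $v$ on a direct tract, using the convexity of $B(r, v)$ in $\log r$ in place of the series expansion. The approach mirrors Macintyre's flat-regions point of view: show that $v$ is approximately equal to a $\log$-linear function on a disc around $z_r$, and then upgrade this to an asymptotic for $f$ itself by controlling the argument.

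The first ingredient is a regularity statement for the central index $a(r, v)$. Since $a(r, v)$ is non-decreasing and $B(r, v)/\log r \to \infty$, a Borel--Nevanlinna type lemma yields an exceptional set $E \subset [1, \infty)$ of finite logarithmic measure such that for $r \notin E$,
\[ a(s, v) = a(r, v)\bigl(1 + o(1)\bigr) \quad \text{uniformly for } |\log(s/r)| = O\bigl(a(r, v)^{-\tau}\bigr). \]
This lets one treat the central index as essentially constant on the scale of the disc $D(z_r, r/a(r, v)^\tau)$. For the containment of this disc in $D$, one uses that $v$ is subharmonic with $v(z_r) = B(r, v) \to \infty$: by the sub-mean value property, together with the upper bound $v(z) \le B(|z|, v)$ and the regularity of $a(r, v)$, the function $v$ stays bounded below by essentially $B(r, v)$ throughout the disc, in particular $v > 0$ there. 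Since $v \equiv 0$ off $D$, this forces $D(z_r, r/a(r, v)^\tau) \subset D$.

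The heart of the proof is the polynomial approximation. Consider the harmonic function $h(z) := B(r, v) + a(r, v)\log(|z|/r)$ and set $w := v - h$, which is subharmonic with $w(z_r) = 0$. Convexity of $B$ in $\log r$ together with the regularity of $a(r, v)$ gives an upper bound $w(z) \le o(1)$ on a slightly enlarged disc; a two-sided subharmonic oscillation estimate then yields $|w| = o(1)$ on $D(z_r, r/a(r, v)^\tau)$. This produces the modulus asymptotic $|f(z)| \sim |z/z_r|^{a(r, v)} |f(z_r)|$. To upgrade to $f$ itself, note that $g(z) := f(z) (z_r/z)^{a(r, v)} / f(z_r)$ is zero-free and holomorphic on the disc (using the natural branch of $\log(z/z_r)$ on this simply connected disc) with $\log |g| = o(1)$; the Borel--Carath\'eodory inequality then controls $\arg g$ on a slightly smaller disc, giving $g \to 1$ uniformly.

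The main obstacle is the two-sided subharmonic oscillation estimate in the polynomial approximation step. The threshold $\tau > \tfrac{1}{2}$ is critical, because the Poisson-type error terms scale like $a(r, v)^{1 - 2\tau}$ times slowly growing factors; this is $o(1)$ exactly when $\tau > \tfrac{1}{2}$, and the method breaks down otherwise. Synchronising the exceptional set from the Borel--Nevanlinna step with the disc radius needed for the oscillation estimate is the delicate technical bookkeeping throughout.
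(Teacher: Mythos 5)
First, a point of orientation: the paper does not prove this statement at all --- it is quoted verbatim from \cite{Tracts} (Theorem 2.2 there) --- so the comparison is with the proof in that reference, the relevant part of which the paper itself adapts in its proof of \Cref{WV}. Your outline of the regularity step (a Borel--Nevanlinna-type argument making $a(\cdot,v)$ essentially constant on the scale $|\log(s/r)|\lesssim a(r,v)^{-\tau}$ off a set of finite logarithmic measure), of the upper bound $w(z):=v(z)-B(r,v)-a(r,v)\log(|z|/r)\le o(1)$ coming from convexity (this is \Cref{Growth} with $1-\alpha-\beta>0$, and $o(1)$ is available precisely because $\tau>1/2$), and of the final upgrade from $|f|$ to $f$ via Borel--Carath\'eodory applied to $\log\bigl(f(z)(z_r/z)^{a(r,v)}/f(z_r)\bigr)$, all match the actual argument, and you correctly identify $\tau>1/2$ as the threshold at which the $a(r,v)^{1-2\tau}$-type errors become $o(1)$.

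The genuine gap is the disc-containment step. You assert that the sub-mean value property together with $v(z)\le B(|z|,v)$ forces $v$ to stay close to $B(r,v)$, hence positive, throughout $D(z_r,r/a(r,v)^{\tau})$. Subharmonicity gives no pointwise lower bound of this kind: a subharmonic function with $w(z_r)=0$ and $w\le o(1)$ on the disc is perfectly consistent with $w\approx -B(r,v)$ (i.e.\ $v=0$) on a nonempty subset of the disc, as long as that subset has small harmonic measure as seen from $z_r$; the circle averages only force the bad set to have small angular measure on each circle, not to be empty. Note also that before containment is established you cannot use harmonicity of $v$ on the disc, only subharmonicity of the glued function, so no minimum principle is available. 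What is actually needed --- and what \cite{Tracts} does on pp.~395--396, in the form reproduced in the proof of \Cref{WV} for the harder range $\tau<1/2$ --- is a contradiction argument via harmonic measure: assuming $\partial D$ meets the disc, one takes the component $V$ of $D\cap D(z_r,512\rho)$ containing $z_r$, shows via a Tsuji--Carleman estimate that the harmonic measure at $z_r$ of the circular part $\Gamma=\partial V\cap\partial D(z_r,512\rho)$ is less than $1/2$ (this uses that every circle $\partial D(z_r,t)$ with $\rho\le t\le 256\rho$ meets the complement of $D$, so that $\theta^*(z_r,t)\le 2\pi$ over a radial interval long enough to beat the constant $3\sqrt2$), and then applies the two-constant theorem to $w$, whose boundary values are $\le o(1)$ on $\Gamma$ and $\le-\tfrac12 B(r,v)$ on $\partial V\setminus\Gamma\subset\partial D$; this contradicts $w(z_r)=0$. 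Without some such harmonic-measure input, your argument does not rule out the complement of $D$ reaching into the Wiman--Valiron disc along a thin channel, which is exactly the scenario the theorem must exclude.
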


We now show that a larger disc than given by \Cref{BRS} is possible inside a simply connected direct tract. In a general direct tract this is not possible, as shown by an example due to  Bergweiler \cite{SizeOfTracts}. Note that, Bergweiler's example has a direct tract with no unbounded boundary component.

First, we recall a standard estimate on $a(r,v)$; see \cite[Lemma 6.10]{Tracts}.
\begin{lemma}\label{Epsilon}
Let $v: \mathbb{C}\rightarrow [-\infty, \infty)$ be subharmonic and let $\epsilon > 0$. Then there exists a set $E\subset [1, \infty)$ of finite logarithmic measure such that 
\[a(r,v) \leq B(r,v)^{1+\epsilon}\]
for $r\geq 1$, $r \notin E$.
\end{lemma}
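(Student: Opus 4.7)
The plan is a routine Borel--Nevanlinna type argument carried out in the logarithmic variable. I would set $t = \log r$ and $\psi(t) := B(e^t,v)$, so that $\psi$ is non-decreasing and convex, with $\psi'(t) = a(e^t,v)$ existing outside an at most countable set. Since the logarithmic measure of $E \subset [1,\infty)$ equals the Lebesgue measure of $\log E \subset [0,\infty)$, it suffices to produce a set $F \subset [0,\infty)$ of finite Lebesgue measure outside of which $\psi'(t) \leq \psi(t)^{1+\epsilon}$. We may assume $\psi(t) \to \infty$, since otherwise $\psi$ is bounded and non-decreasing, so $\int_0^\infty \psi'(t)\,dt < \infty$ and the bad set already has finite measure by Chebyshev's inequality.

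Fix $t_0$ with $\psi(t_0) \geq 1$, and set $F := \{t \geq t_0 : \psi'(t) > \psi(t)^{1+\epsilon}\}$. The key computation is the chain of inequalities
\[
|F| \;\leq\; \int_F \frac{\psi'(t)}{\psi(t)^{1+\epsilon}}\, dt \;\leq\; \int_{t_0}^{\infty} \frac{\psi'(t)}{\psi(t)^{1+\epsilon}}\, dt \;=\; \frac{1}{\epsilon\, \psi(t_0)^{\epsilon}},
\]
where the first inequality uses $\psi'/\psi^{1+\epsilon} > 1$ on $F$, and the final equality uses the antiderivative $-\tfrac{1}{\epsilon}\psi^{-\epsilon}$ together with $\psi(t)\to\infty$. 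Adjoining the bounded interval $[0,t_0]$ to $F$ and exponentiating back to the $r$-variable then yields the desired exceptional set of finite logarithmic measure.

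I do not anticipate a substantive obstacle: the argument is the classical Borel lemma. The only point requiring mild care is the use of the fundamental theorem of calculus when $\psi$ is only differentiable almost everywhere, but convex functions are locally absolutely continuous on the interior of their domain, so the identity $\int_{t_0}^{T} \psi'(t)/\psi(t)^{1+\epsilon}\,dt = -\tfrac{1}{\epsilon}\bigl[\psi(T)^{-\epsilon} - \psi(t_0)^{-\epsilon}\bigr]$ is justified. The choice of $\epsilon$ enters only through the bound $1/(\epsilon\,\psi(t_0)^\epsilon)$, so the exceptional set does grow as $\epsilon \to 0$, but this is harmless for the stated conclusion.
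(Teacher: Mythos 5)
Your argument is correct, and it is essentially the standard proof of this statement: the paper itself gives no proof, citing it as \cite[Lemma 6.10]{Tracts}, and that reference establishes it by exactly this Borel-type growth lemma in the variable $t=\log r$. The only point worth making explicit is that the choice of $t_0$ with $\psi(t_0)\geq 1$ and $\psi$ non-decreasing on $[t_0,\infty)$ is always available in the non-degenerate case, since a non-constant subharmonic function on $\mathbb{C}$ is unbounded above, so $B(r,v)\to\infty$; your bounded case is then vacuous because $v$ must be constant.
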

Next, we state a more general version of \cite[Lemma 11.2]{Tracts}, giving a slightly different estimate of $B(r,v)$ for a general subharmonic function which is  suited for our purpose. This statement is extracted from the inequalities given in the proof of \cite[Lemma 11.2]{Tracts} (in which the function $\Phi(x)= B(r,v)$ and $\Phi'(x)= a(r,v)$, with $r=e^x$).
\begin{lemma}\label{Growth}
Let $v\rightarrow [-\infty, \infty)$ be subharmonic and let $\beta >\alpha> 0$. Then there exists a set $E\subset [1, \infty)$ of finite logarithmic measure such that 
\[B(s,v) \leq B(r,v)+a(r,v) \log \frac{s}{r} + a(r,v)^{1-\alpha-\beta}, \quad \text{for} ~ \left|\log \frac{s}{r} \right| \leq \frac{1}{a(r,v)^\beta}, r \notin E,\]
uniformly as $r \rightarrow \infty$.

\end{lemma}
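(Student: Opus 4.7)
The plan is to pass to the logarithmic variable by setting $x=\log r$, $h=\log(s/r)$, and $\phi(x)=B(e^x,v)$. Standard convexity of $B(r,v)$ with respect to $\log r$ for non-constant subharmonic $v$ makes $\phi$ convex and non-decreasing on $\mathbb{R}$, with derivative $\phi'(x)=a(e^x,v)$ existing outside a countable set and non-decreasing there. The desired inequality becomes
\[
\phi(x+h)-\phi(x)-\phi'(x)\,h \leq \phi'(x)^{1-\alpha-\beta},\qquad |h|\leq \phi'(x)^{-\beta},
\]
and exceptional sets of $x$ of finite Lebesgue measure correspond, via $r=e^x$, to exceptional sets of $r$ of finite logarithmic measure.

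For $h>0$, convexity of $\phi$ and monotonicity of $\phi'$ give
\[
\phi(x+h)-\phi(x)-\phi'(x)\,h = \int_x^{x+h}\bigl(\phi'(t)-\phi'(x)\bigr)\,dt \leq \phi'(x)^{-\beta}\bigl(\phi'(x+\phi'(x)^{-\beta})-\phi'(x)\bigr),
\]
so the claim reduces to showing $\phi'(x+\phi'(x)^{-\beta})-\phi'(x)\leq \phi'(x)^{1-\alpha}$ outside a set of $x$ of finite Lebesgue measure. For $h<0$, writing $h=-k$ with $0<k\leq \phi'(x)^{-\beta}$, an analogous computation gives
\[
\phi(x-k)-\phi(x)+\phi'(x)\,k = \int_{x-k}^{x}\bigl(\phi'(x)-\phi'(t)\bigr)\,dt \leq \phi'(x)^{-\beta}\bigl(\phi'(x)-\phi'(x-\phi'(x)^{-\beta})\bigr),
\]
and applying the previous bound at the auxiliary point $y=x-\phi'(x)^{-\beta}$ (for which $\phi'(y)^{-\beta}\geq \phi'(x)^{-\beta}$ by monotonicity of $\phi'$, so that $y+\phi'(y)^{-\beta}\geq x$) controls $\phi'(x)-\phi'(y)$ by the same exceptional-set argument.

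The heart of the proof is therefore the Borel--Nevanlinna type estimate: if $g\colon[x_0,\infty)\to[1,\infty)$ is non-decreasing and tends to infinity, then $g(x+g(x)^{-\beta})\leq g(x)+g(x)^{1-\alpha}$ holds outside a set of $x$ of finite Lebesgue measure whenever $\beta>\alpha>0$. I would prove this by a Vitali-type selection of disjoint bad intervals $[x_k,x_k+g(x_k)^{-\beta}]$ and summing their lengths against the cumulative multiplicative increase of $g$; the strict inequality $\beta>\alpha$ is exactly what makes the resulting series converge. Since this is essentially the estimate extracted in the proof of \cite[Lemma 11.2]{Tracts} and is implicit in \cite[Lemma 6.10]{Tracts}, I would cite it rather than reprove it. The main obstacle is the bookkeeping that translates the Borel lemma, originally giving a Lebesgue-measure exceptional set of $x$, into the finite-logarithmic-measure exceptional set of $r$ required here, and ensuring that a single exceptional set controls both $h>0$ and $h<0$ uniformly across the stated range.
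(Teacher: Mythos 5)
Your argument is correct and is essentially the argument the paper relies on: the paper offers no proof of its own, merely extracting the statement from the inequalities in the proof of \cite[Lemma 11.2]{Tracts}, which proceeds exactly as you describe (convexity of $B(r,v)$ in $\log r$ reduces everything to a Borel--Nevanlinna growth lemma for the non-decreasing function $a$, whose exceptional set has finite measure precisely because $\beta>\alpha$). The one step worth making explicit is the $h<0$ case: the map $x\mapsto x-\phi'(x)^{-\beta}$ is expanding since $\phi'$ is non-decreasing, so the set of $x$ whose auxiliary point $y$ lands in the finite-measure exceptional set again has finite measure, which justifies your "same exceptional-set argument."
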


We shall use \Cref{Growth} to prove a generalization of \Cref{BRS}, which provides a much larger disc in a direct tract that satisfies certain extra conditions, at the expense of less control on the function as we move further out from the original Wiman--Valiron disc. First, if there exists a direct tract surrounding a sparse collection of zeros, we show that a larger disc exists inside the direct tract than given by \Cref{BRS}. Note that this hypothesis is satisfied by a simply connected direct tract.
\begin{theorem}\label{WV}
Let $f$ be a meromorphic function with a direct tract $D$ with an unbounded boundary component. Let $v$ be defined as in \eqref{v}, and let $z_r \in D$ be a point for which $|z_r|=r$ and $v(z_r)= B(r,v)$. Suppose that there exists $\lambda>1$ and a set $E \subset[1, \infty)$ of finite logarithmic measure such that, for $r \in [1,\infty)\setminus E$, we have that $z_r \in L_r$, a simply connected component of $A(r/\lambda,\lambda r)\cap D$. Also, let $\frac{1}{2}>\tau>0$. Then, for $r \in [1,\infty)\setminus E'$, where $E'$ has finite logarithmic measure, there exists $D(z_r, r/ a(r,v)^\tau) \subset D$.
\end{theorem}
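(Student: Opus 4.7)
My plan is to argue by contradiction. Let $E'$ eventually contain $E$ together with some additional sets of finite logarithmic measure to be absorbed below. Fix $r \in [1,\infty)\setminus E'$ and suppose that $D(z_r, r/a(r,v)^\tau) \not\subset D$. Choose the smallest $\rho_0 \in (0, r/a(r,v)^\tau]$ for which $D(z_r, \rho_0) \subset D$ but $\overline{D(z_r, \rho_0)}$ meets $\partial D$ at some point $w_0$; then $v(w_0) = 0$. Since $\tau < 1/2$ and $a(r,v) \to \infty$ outside a set of finite logarithmic measure, for $r$ sufficiently large $D(z_r, \rho_0) \subset A(r/\lambda, \lambda r)$, and by connectedness it lies in $L_r$, so $w_0 \in \partial L_r \cap \partial D$.

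Next I would fix small parameters $\epsilon_0, \delta > 0$ with $\epsilon_0/(1+\epsilon_0) < \tau$, and apply \Cref{Epsilon} with exponent $\epsilon_0$ and \Cref{Growth} with $\alpha = 2\delta$, $\beta = \tau - \delta$ (taking $\delta$ small enough that $\beta > \alpha$), absorbing their exceptional sets into $E'$. This produces $B(r,v) \geq a(r,v)^{1/(1+\epsilon_0)}$ and, combined with the convexity of $B$ in $\log r$, the two-sided pinch
\[|B(s,v) - B(r,v) - a(r,v)\log(s/r)| \leq a(r,v)^{1-\tau-\delta}\]
for $|\log(s/r)| \leq 1/a(r,v)^{\tau-\delta}$. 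Because $L_r$ is simply connected and avoids the origin, the function $h(z) := v(z) - a(r,v)\log|z/z_r|$ is well defined and subharmonic on $L_r$; it satisfies $h(z) \leq B(r,v) + a(r,v)^{1-\tau-\delta}$ on $L_r \cap \{z: |\log|z|/r| \leq 1/a(r,v)^{\tau-\delta}\}$, with $h(z_r) = B(r,v)$, while at the contact point $h(w_0) = -a(r,v)\log|w_0/z_r| = O(a(r,v)^{1-\tau})$. The parameter choice forces $B(r,v) \gg a(r,v)^{1-\tau}$, so $|h(w_0)|$ is much smaller than $h(z_r)$.

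The final step would be to derive a contradiction via the superharmonic mean-value inequality applied to $H := B(r,v) + a(r,v)^{1-\tau-\delta} - h \geq 0$ on $D(z_r, \rho_0) \subset L_r$, using $H(z_r) = a(r,v)^{1-\tau-\delta}$ and $H(w_0) \gtrsim B(r,v)$. The \textbf{main obstacle} is that a pointwise bound at $w_0$ is not enough: one needs a quantitative modulus-of-continuity estimate showing that $H \gtrsim B(r,v)$ holds on an arc of $\partial D(z_r, \rho_0)$ of length proportional to $\rho_0$. I expect to obtain this by bounding $|\nabla v|$ near $w_0$ via a Cauchy-type estimate on $f$, exploiting the simple connectivity of $L_r$ to extend $f$ holomorphically across $\partial L_r \cap \partial D$ locally. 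Once this is in hand, the mean inequality forces $a(r,v)^{1-\tau-\delta} \gtrsim B(r,v)$, contradicting $B(r,v) \geq a(r,v)^{1/(1+\epsilon_0)}$ for the chosen parameters, and $E'$ is defined as the union of $E$ and the exceptional sets from \Cref{Epsilon} and \Cref{Growth}.
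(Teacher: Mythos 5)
Your skeleton (compare $u=v-B(r,v)-a(r,v)\log(|z|/r)$ at $z_r$ with its values where the disc meets $\partial D$, using \Cref{Epsilon} and \Cref{Growth}) is the right one and matches the paper's, but the step you yourself flag as the ``main obstacle'' is a genuine gap, and the fix you sketch does not close it. Knowing only that $H\gtrsim B(r,v)$ at the single contact point $w_0$ gives nothing: a nonnegative superharmonic function can be large at one boundary point of a disc while its mean over the circle stays tiny (think of a Poisson kernel concentrated at $w_0$). Your proposed repair --- a Cauchy/gradient estimate for $v$ near $w_0$ obtained by extending $f$ across $\partial D$ --- fails for two reasons. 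First, extending $f$ does not extend $v$: by \eqref{v}, $v$ equals $0$ off $D$ and $\log|f/R|$ on $D$, so $v$ is only subharmonic, not harmonic, across $\partial D$, and a subharmonic function bounded above by $M$ with a zero at $w_0$ can climb back to $M/2$ arbitrarily close to $w_0$; no modulus-of-continuity estimate of the kind you need is available. Second, and more structurally, your argument never truly uses the hypothesis that $L_r$ is simply connected: if the nearest boundary point $w_0$ lay on a tiny bounded complementary component (an island) of $D$ inside the disc, $v$ could be comparable to $B(r,v)$ at points of $\partial D(z_r,\rho_0)$ arbitrarily close to $w_0$, so the arc estimate you want is false without that hypothesis. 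Any correct proof must exploit it.

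The paper's proof resolves exactly this difficulty by replacing the single circle $\partial D(z_r,\rho_0)$ with the whole range of circles $\partial D(z_r,t)$, $\rho\le t\le 256\rho$ where $\rho=2ra(r,v)^{-\tau}$. Simple connectivity of $L_r$ rules out bounded complementary components inside $D(z_r,256\rho)$, so the obstruction must be an \emph{unbounded} complementary component crossing every one of these circles; hence the angular measure $\theta^*(z_r,t)\le 2\pi$ on the whole range, and Tsuji's estimate gives $\omega(z_r,\Gamma,V)<1/2$ for the circular part $\Gamma$ of $\partial V$, where $V$ is the component of $D\cap D(z_r,512\rho)$ containing $z_r$. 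The two-constant theorem applied to $u$ (bounded above by $a(r,v)^{\xi(\tau)}$ on $V$ via \Cref{Growth}, and by $-\tfrac12 B(r,v)$ on $\partial V\cap\partial D$ where $v=0$) then yields $u(z_r)\le-\tfrac18 B(r,v)<0=u(z_r)$, a contradiction. If you want to salvage your approach, you should replace the mean-value inequality on one circle by this harmonic-measure/two-constant argument over an annulus of radii with a definite ratio; that is where the simple connectivity and the unbounded boundary component actually enter.
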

\begin{proof}
Choose $\alpha$ and $\beta$ such that $0<\alpha<\beta<\tau<1/2$ and $1-\alpha-\beta=\sqrt{1-2\tau}=\xi(\tau)$, say. Let $E'$ be the union of $E$ and the exceptional sets in \Cref{Epsilon} and \Cref{Growth} for these values of $\alpha$ and $\beta$, where $\epsilon = \beta$. Further, set $\rho = 2 r a(r,v)^{-\tau}$, where $r$ is large enough that $a(r,v) \neq 0$. Consider the subharmonic function
\begin{equation}\label{Ubound}
u(z)= v(z) - B(r,v)-a(r,v) \log \frac{|z|}{r} \leq a(r,v)^{\xi(\tau)},
\end{equation}
for $r \notin E'$ and $z\in \overline{D}(z_r,512 \rho)$ by \Cref{Growth}, and the fact that for $z \in \overline{D}(z_r,512 \rho)$,
\[\left |\frac{z-z_r}{z_r} \right|\leq \frac{512\rho}{r}= \frac{1024}{a(r,v)^\tau}=o(1) ~\text{as}~ r \rightarrow \infty,\]
since $\lim_{r\rightarrow \infty} a(r,v)=\infty$, and so, for large $r$,
\begin{equation}\label{SizeBound}
\left|\log \frac{|z|}{r} \right| = \left| \log \left| 1 + \frac {z-z_r}{z_r}\right|\right| \leq 2 \left |\frac{z-z_r}{z_r} \right| \leq \frac{2048}{a(r,v)^\tau}\leq \frac{1}{a(r,v)^\beta}.
\end{equation}

Now, we show that $D(z_r, r/ a(r,v)^\tau) \subset D$ for $r \notin E'$ sufficiently large. Following \cite[p. 395--396]{Tracts}, suppose not. First, by assumption, $ D(z_r,256 \rho)$  does not contain any bounded complementary components, since the exceptional set of $r$ has finite logarithmic length and $L_r$ is assumed simply connected.  Then, since $D$ has an unbounded boundary component, there exists an unbounded component of the complement of $D$ that intersects $\partial D(z_r,t)$ for $\rho \leq t \leq 256 \rho$. Let $V$ be the component of $D \cap D(z_r, 512 \rho)$ that contains $z_r$ and let $\Gamma= \partial V \cap \partial D(z_r,512 \rho)$. Note that $V$ is simply connected, as $L_r$ is simply connected. Further, in this case, let $t \theta^*(z_r,t)$ be the linear measure of the intersection of $\partial D(z_r,t)$ with $D$. Then, $\theta^*(z_r,t)\leq 2 \pi$ for $\rho \leq t \leq 256 \rho$, since $\partial D(z_r,t)$ is not wholly contained in $D$. Hence, by a result of Tsuji; see \cite[p. 112]{Tsuji},
\[\omega(z_r,\Gamma,V)\leq 3 \sqrt{2} \exp \left(- \pi \int_\rho^{256\rho} \frac{dt}{t \theta^*(z_r,t)}\right) \leq 3 \sqrt{2}\exp \left(- \frac{1}{2} \int_\rho^{256\rho} \frac{dt}{t}\right)=\frac{3\sqrt{2}}{2^4} < \frac{1}{2},\]
where $\omega(z_r,\Gamma,V)$ denotes the harmonic measure of $\Gamma$ at $z_r$ in $V$.
Thus, for $\Sigma=\partial V \setminus \Gamma$ we have
\[\omega(z_r,\Sigma,V)= 1- \omega(z_r,\Gamma,V) > \frac{1}{2}.\]

Next, for $z \in \Gamma$, we have $v(z)=0$, since $z$ is on the boundary of $D$, so from \eqref{Ubound}, \eqref{SizeBound}, and \Cref{Epsilon} with $\epsilon = \beta$, if $r \notin E'$ is sufficiently large, then 
\begin{align*}
u(z)&= -B(r,v) -a(r,v) \log \frac{|z|}{r}\\
& \leq - B(r,v) + a(r,v)^{1-\beta}\\
&\leq  - B(r,v) + B(r,v)^{(1-\beta)(1+\epsilon)}\\
&\leq - \frac{1}{2} B(r,v).
\end{align*}
Hence, by \eqref{Ubound} and the above, we can apply the two constant theorem (see for example \cite[Theorem 4.3.7]{Ransford}) for large $r \notin E'$ and obtain 
\begin{align*}
u(z_r) &\leq -\frac{1}{2} \omega(z_r,\Sigma,V) B(r,v) + a(r,v)^{\xi(\tau)}(1-\omega(z_r,\Sigma,V))\\
&\leq -\frac{1}{4} B(r,v)+ a(r,v)^{\xi(\tau)} \\
&\leq  -\frac{1}{4} B(r,v)+ B(r,v)^{\xi(\tau)(1+\epsilon)} \\
&\leq -\frac{1}{8} B(r,v),
\end{align*}
which gives a contradiction, since $u(z_r)=0$, while $B(r,v) \rightarrow \infty$ as $r \rightarrow \infty$.
Hence,  $D(z_r, r/ a(r,v)^\tau) \subset D$ for $r \notin E'$ sufficiently large.
\end{proof}

We now show that, given a disc of the form $D(z_r, r/ a(r,v)^\tau)$, for $0<\tau<1/2$, inside our tract, we can obtain an estimate for the function $f$ inside that disc. This estimate  includes the case of the original Wiman--Valiron estimate when $\tau>1/2$. A larger disc than that given by Wiman--Valiron theory introduces a worse error to the estimate the larger the new disc is. Note that for the following theorem, we do not insist on any additional hypotheses on the tract, such as being simply connected; we just require the existence of a suitably large disc inside the direct tract. 
Further, note that the function $\xi(\tau) = \sqrt{1-2\tau}$ is chosen so that $\xi(\tau) \rightarrow 1$ as $\tau \rightarrow 0$ and $\xi(\tau) \rightarrow 0$ as $\tau \rightarrow 1/2$, while also $\xi(\tau) < 1-\tau$. These properties will be important in this and subsequent proofs.

\begin{theorem}\label{WVEstimate}
Let $f$ be a meromorphic function with a direct tract $D$ and $\tau>0$. Further, let $v$ be defined as in \eqref{v}, and let $z_r \in D$ be a point for which $|z_r|=r$ and $v(z_r)= B(r,v)$. Then there exists a set $E\subset [1, \infty)$ of finite logarithmic measure, such that, if there exists a disc $ D(z_r, r/ a(r,v)^\tau) \subset D$ for $r \notin E$ sufficiently large, then there exists an analytic function $g$ in $D(z_r, r/ a(r,v)^\tau)$ such that 
\[\log f(z)=\log f(z_r) + a(r,v) \log \frac{z}{z_r} + g(z), \quad \text{for}~ z \in D(z_r, r/ a(r,v)^\tau),\]
where
 \[g(z)=
\begin{cases}
O(a(r,v)^{\xi(\tau)}) &\text{for}~ z\in D(z_r, r/ a(r,v)^\tau) ~\text{and} ~ \tau <1/2, \\
o(1) &\text{for}~ z\in D(z_r, r/ a(r,v)^{\tau})~\text{and} ~ \tau>1/2,
\end{cases}
\] and $\xi(\tau)=\sqrt{1-2\tau}$ as $r\rightarrow \infty$, $r \notin E$.

\end{theorem}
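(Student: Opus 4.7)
The plan is to introduce the analytic function
\[g(z) := \log f(z) - \log f(z_r) - a(r,v)\log(z/z_r)\]
on $D(z_r, r/a(r,v)^\tau)$, with branches of the two logarithms chosen so that $g(z_r) = 0$. This is legitimate because $|f| > R > 0$ on $D$, $0 \notin D(z_r, r/a(r,v)^\tau)$ for $r$ large, and the disc is simply connected. A direct computation gives
\[\operatorname{Re} g(z) = v(z) - B(r,v) - a(r,v)\log(|z|/r),\]
which is exactly the auxiliary function $u$ appearing in the proof of \Cref{WV}. The identity in the statement then holds by definition, and the theorem reduces to converting a one-sided upper bound on $\operatorname{Re} g$ into a two-sided modulus bound for $g$.

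For the one-sided bound I would apply \Cref{Growth} and \Cref{Epsilon} with exponents tailored to $\tau$. When $\tau < 1/2$ I would pick $0 < \alpha < \beta < \tau$ satisfying $1 - \alpha - \beta = \xi(\tau) = \sqrt{1-2\tau}$, which is possible because $(1 - \sqrt{1-2\tau})/2 < \tau$ on $(0,1/2)$. When $\tau > 1/2$ I would pick $\alpha, \beta$ with $1 - \tau < \alpha < \beta < \tau$ so that $a(r,v)^{1-\alpha-\beta} = o(1)$. For $z$ in the disc and $r$ large outside the combined exceptional set from \Cref{Growth} and \Cref{Epsilon}, one checks $|\log(|z|/r)| \leq 2/a(r,v)^\tau \leq 1/a(r,v)^\beta$, and \Cref{Growth} yields
\[v(z) \leq B(|z|,v) \leq B(r,v) + a(r,v)\log(|z|/r) + a(r,v)^{1-\alpha-\beta},\]
giving $\operatorname{Re} g(z) \leq A$ with $A = a(r,v)^{\xi(\tau)}$ when $\tau < 1/2$ and $A = o(1)$ when $\tau > 1/2$.

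Finally, with $g$ analytic on $D(z_r, R)$ where $R = r/a(r,v)^\tau$, $g(z_r) = 0$, and $\operatorname{Re} g \leq A$, the Borel--Carath\'eodory inequality gives $|g(z)| \leq 2s(R-s)^{-1}A$ on concentric discs $|z-z_r| \leq s < R$, which is $O(A)$ on any fixed-ratio subdisc. The main obstacle is that this bound degenerates as $s \to R$, so some analytic slack beyond radius $R$ is needed to cover the full disc $D(z_r, r/a(r,v)^\tau)$. I would supply this slack by two observations: the upper bound on $\operatorname{Re} g$ derived via \Cref{Growth} places no constraint on $z$ lying in $D$ (only the analyticity of $g$ does), and the radial window $|\log(|z|/r)| \leq 1/a(r,v)^\beta$ with $\beta < \tau$ is much wider than what $|z - z_r| \leq R$ demands; moreover, by refining the harmonic-measure step in the proof of \Cref{WV}, one can arrange the strict inclusion $D(z_r, c\,r/a(r,v)^\tau) \subset D$ for some absolute $c > 1$. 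Borel--Carath\'eodory applied with inner radius $R$ and outer radius $cR$ then gives $|g(z)| = O(A)$ throughout $D(z_r, r/a(r,v)^\tau)$; in particular, $g = o(1)$ for $\tau > 1/2$, recovering the classical Wiman--Valiron estimate.
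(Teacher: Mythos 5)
Your proposal follows essentially the same route as the paper: the same function $g$, the one-sided bound on $\operatorname{Re} g$ via \Cref{Growth} (with the same exponent bookkeeping $1-\alpha-\beta=\xi(\tau)$, $\beta<\tau$), and the Borel--Carath\'eodory inequality to upgrade this to a bound on $|g|$. Two points of comparison are worth recording. First, for $\tau>1/2$ the paper simply quotes \Cref{BRS} to get $g=o(1)$, whereas you rederive it from \Cref{Growth} by choosing $\alpha+\beta>1$; this is correct and makes the statement self-contained (and is arguably cleaner, since $\xi(\tau)$ is not real for $\tau>1/2$). Second, you rightly flag that Borel--Carath\'eodory degenerates as the inner radius approaches the outer one; the paper's own proof silently applies the inequality with outer radius $2t$ for $t$ up to the full radius $r/a(r,v)^\tau$, which strictly requires $g$ to be analytic, with the $\operatorname{Re}g$ bound, on a disc of twice that radius. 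Your proposed repair --- securing $D(z_r,c\,r/a(r,v)^\tau)\subset D$ for some $c>1$ by revisiting the harmonic-measure argument --- is sensible but is not available under the hypotheses of \Cref{WVEstimate} itself, which assume only the existence of the disc of radius $r/a(r,v)^\tau$ and impose no geometric condition on $D$; the honest fixes are either to state the conclusion on a fixed fraction of the disc (which is all that is used later, e.g.\ only $S_r$ enters \Cref{Covering}) or, when the disc is supplied by \Cref{WV}, to run that theorem with a slightly smaller $\tau$. One small inaccuracy in your parenthetical: the bound on $\operatorname{Re}g$ obtained from \Cref{Growth} does constrain $z$ to lie in $D$, not just the analyticity of $g$, because \Cref{Growth} controls $v$ and $v$ dominates $\log(|f|/R)$ only inside $D$ (outside $D$ one has $v=0$ while $|f|$ may be large); this does not damage your argument, since you end up placing the enlarged disc inside $D$ anyway. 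Overall the proposal is correct modulo the same boundary-radius subtlety that the paper itself elides.
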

\begin{proof}
Set 
\begin{equation}\label{geq}
g(z) = \log \frac{f(z)}{f(z_r)} - a(r,v) \log \frac{z}{z_r},
\end{equation}
where the branches of the logarithms are chosen so that $g(z_r)=0$. By the Borel--Carath\'eodory inequality \cite[p. 20]{valiron49},
\begin{equation}\label{g}
\max_{|z-z_r| \leq t} |g(z)| \leq 4 \max_{|z-z_r| \leq 2t}\operatorname{Re} g(z) \leq 4 a(r,v)^{\xi(\tau)},
\end{equation}
for $0 < t <\rho/2$ and $r \notin E$, by \Cref{Growth} with $\alpha$ and $\beta$ chosen so that $\beta> \alpha>0$ and $1-\alpha-\beta=\xi(\tau)$.

Thus, by $\eqref{geq}$, for $z\in D(z_r, ra(r,v)^{-\tau})$ and $r\notin E$,
\[\log f(z)=\log f(z_r) + a(r,v) \log \frac{z}{z_r} + g(z), \quad\text{for}~ z \in D(z_r, r/ a(r,v)^\tau),\]
where  $g(z) = O(a(r,v)^{\xi(\tau)})$ for $\tau <1/2$ by \eqref{g} and $g(z) = o(1)$ inside $D(z_r, r/ a(r,v)^{\tau})$ for $\tau>1/2$ by \Cref{BRS}. 
\end{proof}
Note that in the above result, the case $0<\tau<1/2$ is new. Further, for $\tau=1/2$ it is natural to ask what the error, $g$, in the Wiman--Valiron estimate is and whether it is possibly $O(1)$. However, this proof does not show it.

We now use \Cref{WVEstimate} in order to obtain an estimate on the size of the image of this new disc. First, we label three sets where we choose the principal branch of the logarithm; that is the branch corresponding to the principal value of the argument. Let
\begin{equation}\label{SBox}
S_r= \left\{w: |\operatorname{Re} w - \log r | \leq \frac{1/2}{a(r,v)^\tau}, |\operatorname{Im}w-\arg z_r| \leq \frac{1/2}{a(r,v)^\tau}\right\}.
\end{equation}
Further, let
\begin{equation}\label{QBox}
Q=\left\{w:\left|\operatorname{Re} w - \log |f(z_r)| \right|< \frac{1}{8} a(r,v)^{1-\tau}, \left|\operatorname{Im} w - \arg f(z_r) \right|< \frac{1}{8} a(r,v)^{1-\tau}\right\},
\end{equation}
and 
\begin{equation}\label{QHatBox}
\hat{Q}=\left\{w:\left|\operatorname{Re} w - \log |f(z_r)| \right|< \frac{1}{4} a(r,v)^{1-\tau}, \left|\operatorname{Im} w - \arg f(z_r) \right|< \frac{1}{4} a(r,v)^{1-\tau}\right\}.
\end{equation}
\begin{theorem} \label{Covering}
 Let $f$ be a meromorphic function with a direct tract $D$ with an unbounded boundary component and $\tau$, $v$, $z_r$, and $E$  be as in \Cref{WVEstimate}. Consider the logarithmic transform $F$ of $f$. If, for $ r \in [1, \infty) \setminus E$, there exists $D(z_r, r/ a(r,v)^\tau) \subset D$, then ${S_r \subset \log D(z_r, r/ a(r,v)^\tau)}$ is mapped univalently by $F$ and $F(S_r) \supset \hat{Q} \supset Q$. 
\end{theorem}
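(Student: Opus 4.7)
The plan is to invoke \Cref{WVEstimate} to represent $F$ on $S_r$ as an affine map plus a small holomorphic error, and then to read off both conclusions from the fact that the affine part has derivative $a(r,v) \to \infty$, which comfortably dominates the error on the scales in question.

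First I would verify that $S_r \subset \log D(z_r, r/a(r,v)^\tau)$. For $w \in S_r$ one has $|w - \log z_r| \le 1/(\sqrt{2}\, a(r,v)^\tau)$, and since $a(r,v) \to \infty$,
\[
|e^w - z_r| \;=\; |z_r|\,|e^{w-\log z_r}-1| \;\le\; (1+o(1))\,\frac{r}{\sqrt{2}\,a(r,v)^\tau} \;<\; \frac{r}{a(r,v)^\tau}
\]
for large $r$. In particular $e^{S_r}$ sits compactly inside $D(z_r, r/a(r,v)^\tau)$, with a margin of order $r/a(r,v)^\tau$ to the boundary; this margin will be essential for Cauchy's inequality below. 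Setting $z = e^w$ in \Cref{WVEstimate} gives
\[
F(w) = A(w) + g(e^w), \qquad A(w) \;:=\; \log f(z_r) + a(r,v)\,(w - \log z_r),
\]
where $|g(z)| = O(a(r,v)^{\xi(\tau)})$ on $D(z_r, r/a(r,v)^\tau)$ (with the analogous $o(1)$ bound when $\tau > 1/2$). The key scale comparisons are $\xi(\tau) + \tau < 1$ and $\xi(\tau) < 1 - \tau$, both of which are elementary consequences of $\xi(\tau)=\sqrt{1-2\tau}$ for $\tau \in (0,1/2]$.

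For univalence, Cauchy's inequality on the margin above gives $|g'(z)| = O(a(r,v)^{\xi(\tau)+\tau}/r)$ on $e^{S_r}$, so $|(g\circ\exp)'(w)| = |e^w|\,|g'(e^w)| = O(a(r,v)^{\xi(\tau)+\tau}) = o(a(r,v))$. Hence for distinct $w_1, w_2 \in S_r$,
\[
|F(w_1) - F(w_2)| \;\ge\; |A(w_1) - A(w_2)| - |g(e^{w_1}) - g(e^{w_2})| \;\ge\; (1-o(1))\, a(r,v)\, |w_1 - w_2| \;>\; 0,
\]
giving univalence (the case $\tau > 1/2$ is handled identically with the $o(1)$ bound). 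For the inclusion $F(S_r) \supset \hat Q$, the definitions \eqref{SBox} and \eqref{QHatBox} are calibrated so that $A$ (an affine map with real positive scaling $a(r,v)$) sends $S_r$ bijectively onto the closed axis-aligned square of half side-length $a(r,v)^{1-\tau}/2$ centered at $\log f(z_r)$, while $\hat Q$ is the concentric open square of half side-length $a(r,v)^{1-\tau}/4$. Thus for any $w_0 \in \hat Q$ and any $w \in \partial S_r$,
\[
|A(w) - w_0| \;\ge\; \frac{a(r,v)^{1-\tau}}{4} \;>\; C\,a(r,v)^{\xi(\tau)} \;\ge\; |g(e^w)|
\]
for large $r$, using $\xi(\tau) < 1-\tau$. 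Rouch\'e's theorem applied on $\partial S_r$ to the pair $A - w_0$ and $F - w_0 = (A-w_0) + g\circ \exp$ then shows each has a single zero in $S_r$, so $w_0 \in F(S_r)$. Finally $\hat Q \supset Q$ is immediate from \eqref{QBox}--\eqref{QHatBox}.

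The main technical hurdle is the simultaneous calibration of scales: the half side-length $1/(2a(r,v)^\tau)$ of $S_r$ must be small enough that $e^{S_r}$ lies inside $D(z_r, r/a(r,v)^\tau)$ with definite margin (so Cauchy's inequality delivers the required bound on $g'$), yet large enough that $A(S_r)$ strictly contains $\hat Q$ by a factor of two in each coordinate. That factor-of-two gap, combined with $\xi(\tau) < 1-\tau$, is precisely what makes the Rouch\'e comparison work.
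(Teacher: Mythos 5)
Your proposal is correct and follows essentially the same route as the paper: decompose $F$ on $S_r$ via \Cref{WVEstimate} as the affine map $w \mapsto F(w_r)+a(r,v)(w-w_r)$ plus an $O(a(r,v)^{\xi(\tau)})$ error, and exploit $\xi(\tau)<1-\tau$ to see that the image of $\partial S_r$ stays outside a square only slightly smaller than $\tilde Q$, so that a winding-number/Rouch\'e argument yields both univalence and $F(S_r)\supset\hat Q$. Your only deviations are cosmetic and, if anything, more careful: you establish univalence via a Cauchy-estimate bound on the error's derivative (giving $|F(w_1)-F(w_2)|\geq(1-o(1))a(r,v)|w_1-w_2|$) where the paper appeals directly to the argument principle, and you make explicit the margin needed between $e^{S_r}$ and $\partial D(z_r,r/a(r,v)^\tau)$.
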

\begin{proof}
Let $w_r= \log z_r + i \arg z_r$, where we choose the principal branch of the logarithm and recall that $F$ is the logarithmic transform of $f$.
From \Cref{WVEstimate},
\begin{equation}\label{FWVEstimate}
F(w) = F(w_r) + a(r,v) (w-w_r) + O(a(r,v)^{\xi(\tau)}),
\end{equation}
for $w \in \log D(z_r, r/ a(r,v)^\tau)$ and as $r \rightarrow \infty$.

\begin{figure}
\begin{tikzpicture}[scale=1]
\filldraw [ fill=gray,opacity=0.2](-3,1) -- (-4,1) -- (-4,2) -- (-3,2) -- cycle;
\draw (-3,1) -- (-4,1) -- (-4,2) -- (-3,2) -- cycle;
\draw (-3,1) -- (-4,1) -- (-4,2) -- (-3,2) -- cycle;
\draw (0,0) -- (4,0) -- (4,4) -- (0,4) -- cycle;
%\draw (0.5,0.5) -- (3.5,0.5) -- (3.5,3.5) -- (0.5,3.5) -- cycle;
\draw (0.9,0.9) -- (3.1,0.9) -- (3.1,3.1) -- (0.9,3.1) -- cycle;
\draw (1.4,1.4) -- (2.6,1.4) -- (2.6,2.6) -- (1.4,2.6) -- cycle;
\filldraw [ fill=gray,opacity=0.2](0,-0.5) to  [out=180,in=-90] (-0.5,0) to  [out=90,in=-90] (0,1) to  [out=90,in=-90] (-0.3,2) to  [out=90,in=-135] (0.32,3.5) to  [out=45,in=180] (1.5,4.2) to  [out=0,in=180] (2.5,3.75) to  [out=0,in=180] (3.7,4.25) to  [out=0,in=45] (4.1,3.6) to  [out=-135,in=90] (4.3,1.5) to  [out=-90,in=90] (3.9,1.0) to  [out=-90,in=60] (4.3,-0.4) to  [out=240,in=0] (3,0.25) to  [out=180,in=0] (2.2,-0.6) to  [out=160,in=0] (0,-0.5);
\draw (0,-0.5) to  [out=180,in=-90] (-0.5,0) to  [out=90,in=-90] (0,1) to  [out=90,in=-90] (-0.3,2) to  [out=90,in=-135] (0.32,3.5) to  [out=45,in=180] (1.5,4.2) to  [out=0,in=180] (2.5,3.75) to  [out=0,in=180] (3.7,4.25) to  [out=0,in=45] (4.1,3.6) to  [out=-135,in=90] (4.3,1.5) to  [out=-90,in=90] (3.9,1.0) to  [out=-90,in=60] (4.3,-0.4) to  [out=240,in=0] (3,0.25) to  [out=180,in=0] (2.2,-0.6) to  [out=160,in=0] (0,-0.5);
\draw[->] (-2.5,1.5) -- (-0.5,1.5);
\node at  (-1.5,1.5) [above]{$F$} ;
\draw[<->] (-3,0.75) -- (-4,0.75);
\node at (-3.5,0.75) [below]{$\frac{1}{a(r,v)^\tau}$};
\node at (-3.5,2) [above]{$S_r$};

\node at (0.2,3.7) {$\tilde{Q}$};
\node at (1.1,2.8) {$\hat{Q}$};
\node at (1.59,2.34) {$Q$};
\filldraw 
(2,2) circle (1pt) node[below]{$F(w_r)$};

\draw[<-] (5,0) -- (5,1.75);
\draw[->] (5,2.25) -- (5,4); 
\node at (5.2,2) {$a(r,v)^{1-\tau}$};
\node at (-1,-0.75) [below]{$\partial F(S_r)$};
\draw[->] (-1,-0.75) -- (-0.4,-0.4);
\end{tikzpicture}
\caption{A sketch of the argument in the proof of \Cref{Covering}.}
\label{FigureCovering}
\end{figure}
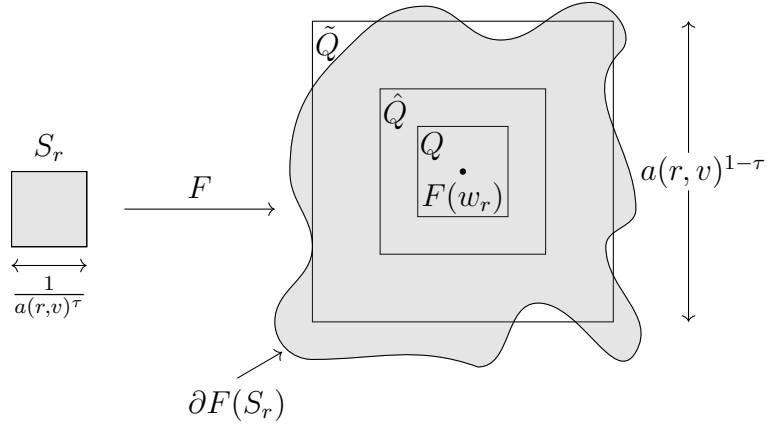

The domain 
\begin{align*}
S_r=& \left\{w: |\operatorname{Re} w - \log r | \leq \frac{1/2}{a(r,v)^\tau}, |\operatorname{Im}w-\arg z_r| \leq \frac{1/2}{a(r,v)^\tau}\right\}\\
\subset&  \log D(z_r,  r/ a(r,v)^\tau)
\end{align*}
is a square of side length $1/a(r,v)^\tau$ centered at $w_r$ and mapped by \[w \mapsto F(w_r)+a(r,v)(w-w_r)\] to a square, $\tilde{Q}$, of side $a(r,v)^{1-\tau}$ centered at $F(w_r)$. Since \[F(w) = F(w_r) + a(r,v) (w-w_r) + O(a(r,v)^{\xi(\tau)}),\] the image of $\partial S_r$ under $F$ is a closed curve lying entirely outside of a square of side \[a(r,v)^{1-\tau}-O(a(r,v)^{\xi(\tau)})\] centered at $F(w_r)$  for large enough $r$. This closed curve winds exactly once around $F(w_r)$, since ${{1-\tau}>\sqrt{1-2\tau}={\xi(\tau)}}$. Hence, by the argument principle, $F$ is univalent on $S_r$ and $F(S_r)$ contains a square of side length  greater than $\frac{1}{2}a(r,v)^{1-\tau}$. \end{proof}

\section{Hausdorff dimension} \label{Hausdorff}
In this section, we show that the set of points of bounded orbit in the Julia set of a meromorphic map with a direct tract with conditions on the singular values, zeros, and the boundary of the tract has Hausdorff dimension strictly greater than one. \Cref{DimensionThm} follows from \Cref{Dim}, below, together with \Cref{WV}, replacing $\lambda$ by $\lambda/2$ in \Cref{Dim} and using the fact that the exceptional set has finite logarithmic length.
 Recall that $v$ is defined as in \eqref{v}, and $z_r$ is a point for which $|z_r|=r$ and $v(z_r)= B(r,v)$.

\begin{figure}
\begin{tikzpicture}[scale=1]
\draw (-0.25,-0.25) -- (4.25,-0.25) -- (4.25,4.25) -- (-0.25,4.25) -- cycle;
\draw (1,1) -- (3,1) -- (3,3) -- (1,3) -- cycle;
\draw (0.2,0.2) -- (3.8,0.2) -- (3.8,3.8) -- (0.2,3.8) -- cycle;
\draw (-5,-1.5)--(-5,5.5);
\draw[dashed] (-4,-1.5)--(-4,5.5);
\draw[dashed] (-3,-1.5)--(-3,5.5);
\draw (-3.8,1) circle [radius=0.15];
\node at (-3.67,0.85) [below]{$w_{r,s}$};
\filldraw  (-3.8,1) circle (0.5pt);
%\draw[-] (-3.7,0.8) -- (-3.75,0.95) ;

\node at (4.3,-0.3)[above left]{$Q$};
\node at (3,1)[above left]{$Q''$};
\node at (3.8,0.2)[above left]{$Q'$};

\draw [->] (-3.5,1.1) to  [out=20,in=160] (-0.5,1.1);
\node at (-2,1.65){$F$};

\draw [->] (-0.5,0.9) to  [out=200,in=-20] (-3.5,0.9);
\node at (-2,0.25){$G_s$};

\draw [->] (1.5,2.5) to  [out=110,in=20] (-2.75,4);
\node at (-1,4.5){$F$};

\draw [->] (-2.75,3.8) to  [out=-40,in=180] (1.3,2.5);
\node at (-1,3.1){$H_u$};

\draw[dotted] (6,0) to  [out=180,in=-20] (3,0) to  [out=160,in=30] (0,0) to  [out=210,in=0] (-3,0) to  [out=180,in=-90] (-4.5,0) to  [out=90,in=180] (-3.7,0.25)to  [out=0,in=190] (-1,0.25) to  [out=10,in=170] (2,0.5) to  [out=-10,in=160] (5,0.25) to  [out=-20,in=180] (6,0.25);

\begin{scope}[shift={(-0,1)}]
   \draw[dotted] (6,0) to  [out=180,in=-20] (3,0) to  [out=160,in=30] (0,0) to  [out=210,in=0] (-3,0) to  [out=180,in=-90] (-4.5,0) to  [out=90,in=180] (-3.7,0.25)to  [out=0,in=190] (-1,0.25) to  [out=10,in=170] (2,0.5) to  [out=-10,in=160] (5,0.25) to  [out=-20,in=180] (6,0.25);
  \end{scope}
  \begin{scope}[shift={(-0,2)}]
   \draw[dotted] (6,0) to  [out=180,in=-20] (3,0) to  [out=160,in=30] (0,0) to  [out=210,in=0] (-3,0) to  [out=180,in=-90] (-4.5,0) to  [out=90,in=180] (-3.7,0.25)to  [out=0,in=190] (-1,0.25) to  [out=10,in=170] (2,0.5) to  [out=-10,in=160] (5,0.25) to  [out=-20,in=180] (6,0.25);
  \end{scope}
  \begin{scope}[shift={(-0,3)}]
   \draw[dotted] (6,0) to  [out=180,in=-20] (3,0) to  [out=160,in=30] (0,0) to  [out=210,in=0] (-3,0) to  [out=180,in=-90] (-4.5,0) to  [out=90,in=180] (-3.7,0.25)to  [out=0,in=190] (-1,0.25) to  [out=10,in=170] (2,0.5) to  [out=-10,in=160] (5,0.25) to  [out=-20,in=180] (6,0.25);
  \end{scope}
  \begin{scope}[shift={(-0,4)}]
   \draw[dotted] (6,0) to  [out=180,in=-20] (3,0) to  [out=160,in=30] (0,0) to  [out=210,in=0] (-3,0) to  [out=180,in=-90] (-4.5,0) to  [out=90,in=180] (-3.7,0.25)to  [out=0,in=190] (-1,0.25) to  [out=10,in=170] (2,0.5) to  [out=-10,in=160] (5,0.25) to  [out=-20,in=180] (6,0.25);
  \end{scope}
  \begin{scope}[shift={(-0,5)}]
   \draw[dotted] (6,0) to  [out=180,in=-20] (3,0) to  [out=160,in=30] (0,0) to  [out=210,in=0] (-3,0) to  [out=180,in=-90] (-4.5,0) to  [out=90,in=180] (-3.7,0.25)to  [out=0,in=190] (-1,0.25) to  [out=10,in=170] (2,0.5) to  [out=-10,in=160] (5,0.25) to  [out=-20,in=180] (6,0.25);
  \end{scope}
    \begin{scope}[shift={(-0,-1)}]
   \draw[dotted] (6,0) to  [out=180,in=-20] (3,0) to  [out=160,in=30] (0,0) to  [out=210,in=0] (-3,0) to  [out=180,in=-90] (-4.5,0) to  [out=90,in=180] (-3.7,0.25)to  [out=0,in=190] (-1,0.25) to  [out=10,in=170] (2,0.5) to  [out=-10,in=160] (5,0.25) to  [out=-20,in=180] (6,0.25);
  \end{scope}

\draw[<-] (-0.25,-0.6 ) -- (1.15,-0.6);
\draw[->] (2.75,-0.6 ) -- (4.25,-0.6); 
\node at (2.15,-0.6) {$ \frac{1}{4} a(r,v)^{1-\tau}$};

\end{tikzpicture}
\caption{Construction of $J_B$ in \Cref{Dim}.}
\label{Figure}
\end{figure}

\begin{theorem} \label{Dim}
Let $f$ be a meromorphic map with a direct tract $D$ with an unbounded boundary component.  Fix $\lambda > 1$ and $\frac{1}{2} > \tau>0$, and let $E$ be the set of finite logarithmic measure  in \Cref{{WVEstimate}}. Suppose that for arbitrarily large $r\in [1,\infty)\setminus E$ there exists an annulus $A({r}/{\lambda}, \lambda r)$ such that both
\begin{enumerate}[(i)]
\item $D(z_r, r/a(r,v)^\tau) \subset A({r}/{\lambda}, \lambda r) \cap D$, and \label{Dim1}
\item $A({r}/{\lambda}, \lambda r)$ contains no singular values of the restriction of $f$ to $D$. \label{Dim2}
\end{enumerate}
Then $\dim   J(f) \cap K(f)> 1$.
\end{theorem}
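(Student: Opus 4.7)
The plan is to construct, following the scheme of Bara\'nski, Karpi\'nska, and Zdunik~\cite{BKZ09}, a Cantor-like subset of $J(f) \cap K(f)$ of Hausdorff dimension strictly greater than $1$, realised as the attractor of an iterated function system (IFS) of inverse branches of the logarithmic transform $F$ of $f$. The substitute for the uniform expansion throughout a logarithmic tract in~\cite{BKZ09} is provided by two complementary families of inverse branches: the $2\pi i$-periodic pull-backs of the covering square $Q$ furnished by~\Cref{Covering}, and the pull-backs on the singular-value-free strip $S$ furnished by~\Cref{expansion}.

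First, fix $r \in [1,\infty)\setminus E$ large with (i), (ii). By~\Cref{Covering} and the $2\pi i$-periodicity of $F$, for each $k \in \mathbb{Z}$ the map $F$ admits a univalent inverse branch $G_k\colon Q \to S_r + 2\pi ik \subset \log D$; by the Koebe $1/4$-theorem each $G_k$ has derivative $|G_k'| \asymp 1/a(r,v)$ and bounded distortion on compact subsets of $Q$. By~\Cref{expansion} applied to the annulus $A(r/\lambda,\lambda r)$ from hypothesis~(ii), and its $2\pi i$-lifts, one obtains a family $\{H_j\}_{j \in \mathbb{Z}}$ of univalent inverse branches of $F$ on the strip $S=\{\log(r/\lambda) < \operatorname{Re} w < \log \lambda r\}$, with $|H_j'|$ bounded via~\eqref{HDerbound}. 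Crucially, each $S_r + 2\pi ik$ has real part $\log r$ and Euclidean width $o(1)$, so it lies well inside $S$ for large $r$, and hence every $H_j$ is defined on each $G_k(Q)$.

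The compositions $\Psi_{k,j} := H_j \circ G_k$ are inverse branches of $F^2$ defined on $Q$, with pairwise disjoint images sitting in different $2\pi i$-translates of $\log \Omega$. By selecting pairs $(k,j)$ for which $\Psi_{k,j}(Q)$ lies inside a fixed compact set $X \subset Q \cap \log D$, one obtains an IFS $\{\Psi_{k,j}\}$ on $X$ with bounded distortion, comprising $N \asymp a(r,v)^{2(1-\tau)}$ branches, each of contraction $|\Psi_{k,j}'| \asymp 1/a(r,v)$ (the factor $|H_j'|$ contributing only a bounded multiplier). The attractor of this IFS lies in $J(f)$ since the $\Psi_{k,j}$ realise a repelling hyperbolic structure for $F$, and it lies in $K(f)$ because orbits are trapped in the bounded set $X$, so that $f^n$ remains bounded after exponentiation.

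The Hausdorff dimension $d$ of the attractor is the unique value satisfying the Bowen-type identity $\sum_{k,j}|\Psi_{k,j}'|^d \asymp 1$, which rearranges to
\[
a(r,v)^{2(1-\tau) - d} \asymp 1,
\]
so $d \to 2(1-\tau)$ as $r \to \infty$. Since the hypothesis forces $\tau < 1/2$, we have $2(1-\tau) > 1$ and hence $d > 1$ for all sufficiently large $r$. The main obstacle is the verification of the count $N \asymp a(r,v)^{2(1-\tau)}$: this requires showing that both the vertical extent of $Q$ (giving the $k$-count of order $a(r,v)^{1-\tau}$) and the vertical extent of $X$ within the $2\pi i$-translates of $\log \Omega$ (giving the $j$-count of order $a(r,v)^{1-\tau}$) contribute their full order, and that the resulting images $\Psi_{k,j}(Q)$ are indeed pairwise disjoint inside $X$. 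This geometric bookkeeping, which replaces the clean single-strip analysis available in the logarithmic tract case of~\cite{BKZ09}, is the principal technical content of the proof; once it is established, exponentiating back to the $z$-plane is Lipschitz on the bounded attractor and the dimension bound transfers to $J(f) \cap K(f)$.
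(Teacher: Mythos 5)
Your outline follows the same strategy as the paper's proof: two families of inverse branches of the logarithmic transform $F$ (the $2\pi i$-translated pull-backs $G_s$ of $Q$ into the small squares $S_{r,s}$ supplied by \Cref{Covering}, and the branches $H_u$ on the singular-value-free strip supplied by \Cref{expansion}), composed to give an iterated function system $H_u\circ G_s\colon Q\to Q$ whose attractor has dimension greater than $1$ because the number of branches beats the contraction. However, the step you yourself label ``the principal technical content'' is exactly where the proof lives, and it is not carried out; moreover, one of the asymptotics you assert on the way to the Bowen identity is not available.

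Concretely: (1) you claim each composition satisfies $|\Psi_{k,j}'|\asymp 1/a(r,v)$, with ``the factor $|H_j'|$ contributing only a bounded multiplier.'' \Cref{expansion} part \ref{expansionc} gives only the \emph{upper} bound \eqref{HDerbound}; there is no uniform pointwise lower bound on $|H_u'|$, so the two-sided bound, and hence the identity $a(r,v)^{2(1-\tau)-d}\asymp1$, are unjustified. The paper circumvents this by bounding from below the \emph{sum} $\sum_s|(H_u)'(w_{r,s})|$: each level curve $l_u=H_u(l)$ that meets the inner square $Q''$ is unbounded, so its intersection with $Q'$ has length at least $\tfrac1{16}a(r,v)^{1-\tau}-d$, and the Koebe distortion theorem converts this length into $\sum_s|(H_u)'(w_{r,s})|\gtrsim a(r,v)^{1-\tau}$; combined with $|(G_s)'(F(w_{r,s}))|=1/a(r,v)$ and bounded distortion this yields $\sum_{u,s}|(H_u\circ G_s)'(w)|\gtrsim a(r,v)^{1-2\tau}$ and hence $P(1)>0$, which is all that is needed. (2) The count of admissible $u$ (at least $a(r,v)^{1-\tau}/16\pi$ curves $l_u$ meeting $Q''$) is precisely where the hypothesis that $D$ has an unbounded boundary component enters: it guarantees an unbounded level curve $l_0$ of $\operatorname{Re}F=\log r$ whose $2\pi i$-translates, spaced $2\pi$ apart, must sweep across the square $Q''$ of side comparable to $a(r,v)^{1-\tau}$. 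Your proposal never invokes this hypothesis, and without it the count of $H$-branches can collapse. (3) A small but necessary point at the end: $J_B$ is bounded but $F(J_B)$ is spread over infinitely many translates $S_{r,s}$; only after exponentiating is the invariant set $X=\exp(J_B\cup F(J_B))$ bounded, giving $X\subset K(f)$, and one then combines $|(f^n)'|\to\infty$ on $X$ with this boundedness to get $X\subset J(f)$. In short, the skeleton is correct, but the counting and the replacement of pointwise by summed derivative estimates --- the content of Steps 3--5 of the paper's argument --- are missing.
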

\begin{proof}

First, let $E$ be the set of finite logarithmic measure in \Cref{{WVEstimate}}. Fix $\frac{1}{2}>\tau>0$ and $\lambda>1$. Then, take $r\in[1,\infty)\setminus E$ so large that \ref{Dim1} and \ref{Dim2} are satisfied.  We use the sets $S_r$, $Q$, and $\hat{Q}$  introduced in \eqref{SBox}, \eqref{QBox}, and \eqref{QHatBox} before the statement of \Cref{Covering}. By \Cref{Covering}, the image of  $S_r \subset D(z_r, r/a(r,v)^\tau)$ under $\log f$ covers the squares $Q$ and $\hat{Q}$.

Following \cite{BKZ09} we will construct an iterated function system based on taking inverse branches of $F^2$ that map $Q$ to $Q$, where $F$ is the $2\pi i$ periodic logarithmic transform of $f$ which satisfies $\exp \circ F = f \circ \exp$; see \Cref{Figure}. We show that the dimension of the attractor of this system is strictly greater than $1$. The method for constructing this iterated function system is, however, somewhat different from that of  Bara\'nski, Karpi\'nska, and Zdunik, and we make use of the tools from Wiman--Valiron theory developed in \Cref{Wiman Valiron}.

\textbf{Step 1.} We  use our results from \Cref{{Wiman Valiron}} to construct branches of $F^{-1}$ from $Q$ to $\log D(z_r,r/a(r,v)^\tau)$. We let $w_r=\log z_r + i \arg z_r$, where $\arg$ is the principal argument. We now let $w_{r,s}= w_{r,0} + 2 \pi i s$, for $s \in \mathbb{Z}$, where $w_{r,0}=w_r$, and let $S_{r,s}=S_r+ 2 \pi i s$, for $s \in \mathbb{Z}$. 
%We also consider the domain $D_0$ such that $w_{r,0} \in D_0$ and $\exp(D_0)=D(z_r,r/a(r,v)^\tau)$ with $D_s=D_0+2 \pi i s$ for $s \in \mathbb{Z}$.
%We now let $w_{r,0}=w_r$ and consider the domain $D_0$ such that $\exp(w_{r,0})=z_r$ for $w_{r,0} \in D_0$, and $\exp(D_0)=D(z_r,r/a(r,v)^\tau)$. Let $w_{r,s}= w_{r,0} + 2 \pi i s$ and $D_s=D_0+2 \pi i s$ for $s \in \mathbb{Z}$. 
Further, let $G_s$ be the branch of $F^{-1}$ satisfying $G_s(F(w_{r,0}))=w_{r,s}$. Then, $G_s: \hat{Q} \rightarrow S_{r,s}$ is univalent, since $F$ is univalent on $S_{r,s}$ and $F(S_{r,s}) \supset \hat{Q}\supset Q$, by \Cref{Covering}. By \eqref{FWVEstimate} and by Cauchy's estimate
\[F'(w)=a(r,v)+O(a(r,v)^{\xi(\tau)+\tau})=a(r,v)(1+o(1)), ~\text{for}~ w\in S_{r,s}, F(w) \in Q, ~\text{and}~ s \in \mathbb{Z},\]
as $r \rightarrow \infty$. 
Hence there exists an absolute constant $K>0$ such that
\begin{equation}\label{Gs estimate}
 \max_{w \in Q} |(G_s)'(w)| < \frac{K}{a(r,v)}, ~\text{for}~ s \in \mathbb{Z}.
 \end{equation}
Note that $F'(w_{r,s})=a(r,v)$, and so
\begin{equation}\label{Gwrs estimate}
 |(G_s)'(F(w_{r,s}))| = \frac{1}{a(r,v)}, ~\text{for}~ s \in \mathbb{Z}.
 \end{equation}

\textbf{Step 2.} Next, using similar methods to \cite{BKZ09}, we construct another family of branches of $F^{-1}$. First, choose $r$ so large that $\max_{w\in l} \operatorname{Re} F(w) > \log r$, where $l={\{w:\operatorname{Re}w=\log r\}}$. This is possible since 
\[\frac{\max_{w\in l} \operatorname{Re} F(w)}{\log r} = \frac{B(r,v)}{\log r} \rightarrow \infty\]
as $r\rightarrow \infty$. Since $D$ has an unbounded boundary component with boundary value $R$, any domain of the form $\{z\in D: |f(z)| > R'\}$, for $R'>R$, must also have an unbounded boundary component. Thus we can choose an unbounded curve $l_0$, say, in $\log D$ on which $\operatorname{Re} F(w)=\log r$ and $l_0 \cap \{w:\operatorname{Re}w<\log r\} \neq \emptyset$. Then, by \Cref{expansion} part \ref{expansionb}, $F(l_0)=l$.
Let $\zeta_0 \in l_0$ with $F(\zeta_0) \in l$ and let $\zeta_u=\zeta_0+2\pi i u$, for $u \in \mathbb{Z}$. Finally, let $H_u$ be the branch of $F^{-1}$ that maps $F(\zeta_0)$ to $\zeta_u$, as given by \Cref{expansion} part \ref{expansionb}.

Since there exist no singular values of $f$ restricted to $D$ that lie in $A({r}/{\lambda}, \lambda r)$, we can obtain the following estimate on the derivative of $H_u$ by applying \Cref{expansion} part \ref{expansionc} for $w\in \{w:\log (r/\sqrt{\lambda})<\operatorname{Re} w < \log \sqrt{\lambda} r\}$:
\begin{equation}\label{Hu estimate}
\max_{w \in S_{r,s}}|(H_u)'(w)|<\frac{4 \pi}{\log(\lambda r) - \log(\sqrt{\lambda} r)}=\frac{4\pi}{\log \sqrt{\lambda}}.
\end{equation}

%Now, let $F$ be the logarithmic transform of $f$.  Denote by $G_s(z)$ the $s$ preimages of $z$ under $F$ corresponding to taking the preimages of $Q$ and  or each of the $2\pi i$ translates of $\log D(z_r, r/a(r,v)^\tau)$ that map onto $Q$. Similarly, $H_u(z)$ is the $u$ preimage of $z$ under $F$, corresponding to taking the preimage of the strip $\{ z: \log r/\lambda <\operatorname{Re}(z)<\log \lambda r\}$. Further, let $z_s=\log |z_r| +2\pi i s$. 

\textbf{Step 3.} Now, we estimate the diameter of $Q_{u,s}=H_u \circ G_s(Q)$, a second preimage of $Q$.  Recall that $G_s(Q) \subset S_{r,s}$, so
by \eqref{Gs estimate}~and~\eqref{Hu estimate},
\[\max_{w \in Q} |(H_u\circ G_s)'(w)|<\frac{4 \pi K}{a(r,v) \log \sqrt{\lambda}}.\]
So, there exists $d \in (0, 1)$ such that
\begin{equation}\label{DiameterD}\operatorname{diam} Q_{u,s} < \max_{w \in Q} |(H_u\circ G_s)'(w)| \operatorname{diam} Q < \frac{4 \pi \sqrt{2} K a(r,v)^{1-\tau} }{ 4 a(r,v) \log \sqrt{\lambda}} < d,\end{equation}
for $r$ sufficiently large.

Let \[Q'=\left\{w:\left|\operatorname{Re} w - \log |f(z_r)| \right|< \frac{1}{8} a(r,v)^{1-\tau}-d, \left|\operatorname{Im} w - \arg f(z_r) \right|< \frac{1}{8} a(r,v)^{1-\tau}-d\right\}\]
and
\[Q''=\left\{w:\left|\operatorname{Re} w - \log |f(z_r)| \right|< \frac{1}{16} a(r,v)^{1-\tau}, \left|\operatorname{Im} w - \arg f(z_r) \right|< \frac{1}{16} a(r,v)^{1-\tau}\right\},\]
where $d$ satisfies \eqref{DiameterD}.

\textbf{Step 4.} We now consider level curves of $F$ that meet $Q''$. Let $l_u$  be the image under $H_u$ of $l=\{w:\operatorname{Re} w = \log r\}$, that is a translate of the curve $l_0$ introduced in Step 2. Denote by $l_{u,Q'}$ the intersection of $l_u$ and $Q'$. From Step 2, we know that $l_u$ is unbounded, and so if $l_{u,Q'} \cap Q'' \neq \emptyset$, then 
\begin{equation}\label{lengthestimate}
\operatorname{length}(l_{u,Q'}) \geq  \frac{1}{16}a(r,v)^{1-\tau}-d.
\end{equation}
Note that since $l_0\cap l\neq \emptyset$ (from Step 2) there are at least $a(r,v)^{1-\tau}/16\pi$ values of $u$ such that $l_u \cap Q''\neq \emptyset$.

\textbf{Step 5.} Next, we obtain an estimate on $\sum_u \sum_s |(H_u \circ G_s)'(w)|$ for $w \in Q$, and $u$ and $s$ such that $H_u\circ G_s(Q)=Q_{u,s}\subset Q$.
Now, $l_{u,Q'} \subset H_u(l)$. Since $H_u$ is univalent on ${\{w:\log(r/\lambda)<\operatorname{Re}w<\log(\lambda r)\}}$, it follows from the Koebe distortion theorem (applied to a covering of $l$ by disks of a uniform size) that there exists a constant $C_2>0$ such that
\begin{equation}\label{lengthinequality}
\operatorname{length}(l_{u,Q'}) < C_2 \sum_s |(H_u)'(w_{r,s})|,
\end{equation}
for any $u$ such that $l_{u,Q'}\cap Q'' \neq \emptyset$, summing over all $s$ such that $H_u(w_{r,s}) \in l_{u,Q'}$. Note that if ${H_u(w_{r,s}) \in l_{u,Q'}}$, then $Q_{u,s} \subset Q$ by \eqref{DiameterD}.

The distortion of $H_u \circ G_s$ is uniformly bounded on $Q$, since $G_s:\hat{Q} \rightarrow S_{r,s}$ is univalent on $\hat{Q}\supset Q$, and $H_u$ is univalent  on $\{w:\log(r/\lambda)<\operatorname{Re}w<\log(\lambda r)\}$. Therefore, there exists a constant $C>0$ such that
\[|(H_u \circ G_s)'(w)|>\frac{1}{C}|(H_u \circ G_s)'(F(w_{r,s}))|, \quad \text{for}~ w \in Q.\]
So, by \eqref{Gwrs estimate}, \eqref{lengthestimate}, and \eqref{lengthinequality}, for $w \in Q$, and $u$ and $s$ such that $Q_{u,s} \subset Q$,
\begin{align*}
 \sum_s |(H_u \circ G_s)'(w)|&> \frac{1}{C}  \sum_s |(H_u \circ G_s)'(F(w_{r,s}))| \\
&= \frac{1}{C a(r,v)}\sum_s |(H_u)'(w_{r,s})| \\
&> \frac{\frac{1}{16}a(r,v)^{1-\tau}-d}{C C_2 a(r,v)}\\
&>\frac{a(r,v)^{(1-\tau)}}{C_3 a(r,v) },
\end{align*}
for $r$ sufficiently large, where $C_3>0$ is some constant. As we noted at the end of Step 4, there exist at least $a(r,v)^{1-\tau}/16\pi$ curves $l_u$ that meet $Q''$. Hence, for $w \in Q$, and $u$ and $s$ such that $Q_{u,s} \subset Q$,
\begin{equation}\label{PressureSum}
\sum_u \sum_s |(H_u \circ G_s)'(w)|>\frac{a(r,v)^{2(1-\tau)}}{C_3 16 \pi a(r,v) }.
\end{equation}

\textbf{Step 6.}
Following \cite[p. 622 - 623]{BKZ09}, we now obtain a conformal iterated function system from the family of maps $H_u \circ G_s: Q\rightarrow Q$. The sets  $H_u \circ G_s(Q)=Q_{u,s}$ are pairwise disjoint and the system gives a maximal compact invariant set, $J_B$, say. Note that by construction $F^2(J_B)=J_B$ and $F(J_B) \subset \cup_{s\in \mathbb{Z}} D_s$. Further, $\lim_{n\rightarrow \infty} |(F^n)'(w)| = \infty$ for every $w\in J_B$ by \eqref{Gs estimate}~and~\eqref{Hu estimate}.

Recall that the Hausdorff dimension of $J_B$ is the unique zero of the pressure function
\[P(t)=\lim_{n \rightarrow \infty} \frac{1}{n} \log \sum_{g^n} ||(g^n)'||^t,\]
where $g^n=g_{i_1} \circ \cdots \circ g_{i_n}$, with $g_{i_j}= H_u \circ G_s$ for $u$ and $s$ such that $Q_{u,s} \subset Q$. Further recall that the pressure function is strictly decreasing, so in order to prove that $\dim J_B>1$ it is sufficient to show that $P(1)>0$. For an introduction to the pressure function see \cite{Bowen} and \cite{KotUrb08}.

Now, estimating the pressure using \eqref{PressureSum},
\begin{align*}
P(1) &= \lim_{n \rightarrow \infty} \frac{1}{n} \log \sum_{g^n} ||(g^n)'|| \\
&\geq \lim_{n \rightarrow \infty} \frac{1}{n} \log \left(\inf_{w \in Q} \sum_{(u,s)} |(H_u \circ G_s)'(w)|\right)^n \\
&\geq \log \left(\frac{a(r,v)^{2(1-\tau)}}{C_3 16 \pi a(r,v) }\right)\\
&>0,
\end{align*}
for $r$ sufficiently large, since, by hypothesis, $\tau<1/2$. This implies that the Hausdorff dimension of the invariant set $J_B$ that arises from this system is greater than $1$.

Let $X = \exp(J_B \cup F(J_B))$. As $\exp \circ F=f \circ \exp$ and $J_B \cup F(J_B)$ is $F$-invariant, $X$ is $f$-invariant. Further, the exponential function is a smooth covering map, so $\dim X=\dim J_B \cup F(J_B)>1$. Finally, we have that $\lim_{n\rightarrow \infty} |(F^n)'(w)| = \infty$ for $w\in J_B$ by \eqref{Gs estimate} and \eqref{Hu estimate}. Therefore,  $\lim_{n\rightarrow \infty} |(f^n)'(z)| = \infty$ for $z\in X$. However, $f^n (X)$ is bounded, so by normality, $X \subset J(f)$. Therefore, $\dim J(f) \cap K(f)>1$.
\end{proof}

\section{Examples}\label{Examples}
In this section, we give two examples in order to illustrate the application of our results. First, we consider a function $f$ with one direct tract, which is not simply connected. However, this function satisfies the hypotheses of \Cref{Dim} and so the Hausdorff dimension of $J(f) \cap K(f)$ is strictly greater than one.
\begin{example}\label{example1}
Let 
\[f(z)= \cos(z)\exp(z).\]
Then
$\dim J(f) \cap K(f) >1$. 

Consider the direct tract $D$ of $f$ with boundary value $1$; see \Cref{cosexpfig}. Then $D$ contains $\{ iy: y \neq 0\}$. Note that the only zeros of $f$ are real and the only finite asymptotic value of $f$ is $0$, by the Denjoy-Carleman-Ahlfors theorem \cite[XI.4]{Nevanlinna}, since $f$ has order $1$ and is symmetric in the real axis. We show that the points where $f$ attains its maximum in the upper half-plane lie asymptotic to the line $y=x$ and that $D$ contains a disc satisfying \Cref{Dim}. 

\begin{figure}[!t]
  \centering
    \fbox{\includegraphics[width=0.4\textwidth]{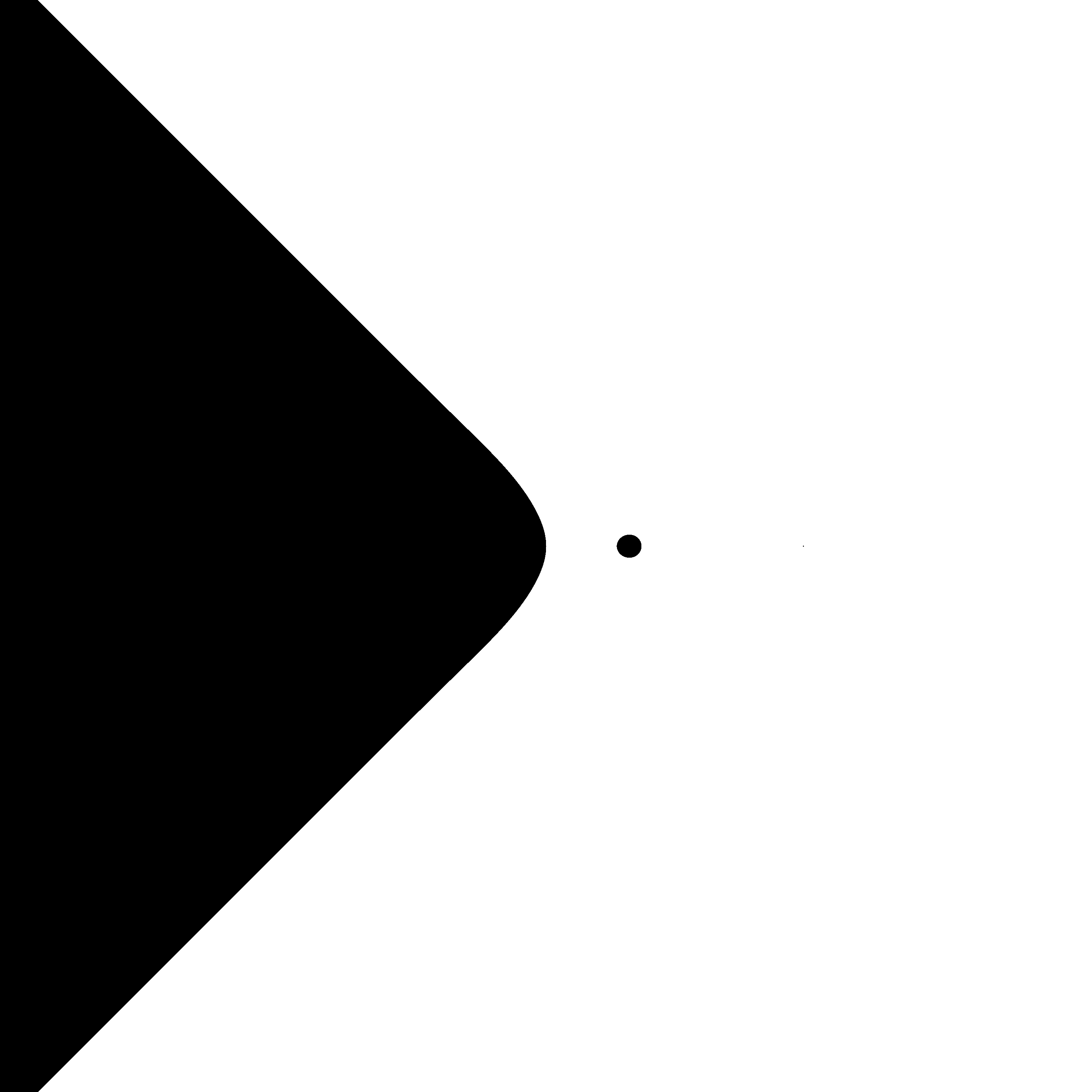}}
  \caption{The direct tract of $\cos(z)\exp(z)$ in white with boundary value $1$ and zeros at $\pi/2 + k\pi$ for $k\in \mathbb{Z}$. Its boundary includes the point $0$.}
  \label{cosexpfig}
\end{figure}

We use the following notation and results of  Tyler \cite{MaxCurves} on maximum modulus curves; that is curves on which $|f(z)|=M(|z|)$. First, let 
\[A(z) = \frac{zf'(z)}{f(z)} ~\text{and}~ B(z) = zA'(z).\] Further, let
\[ b(r)= \frac{d^2 \log M(r)}{d(\log r)^2}.\]
Then, by \cite[p. 2562]{MaxCurves} we have the following two properties:
\begin{enumerate}[(a)]
\item $|f(z)|  = M(r) \implies A(z)$ is real and positive, for $|z|=r$, \label{Conda}
\item $|f(z)|  = M(r) \implies B(x) = b(r) \geq 0$, for $x=r$.\label{Condb}
\end{enumerate}
For $f(z)= \cos(z)\exp(z)$, we have that $A(z) = z(1-\tan z)$. Now, $A(z)$ is real and positive either in various intervals on the real line or asymptotically close to the line $y=x$, since $\tan (z) \rightarrow i$ as $\operatorname{Im}z \rightarrow {\infty}$.

By considering $B(x)$, we show that there are no large maximum modulus points on the positive real axis. First,
\[B(x)= x(1-\tan x - x \sec^2 x) = x(1-\tan x - x(1+\tan^2 x)).\]
Hence, for $x>0$,
\[B(x) \geq 0 \iff x \leq \frac{1-\tan x}{1+\tan^2 x} \iff x \leq 0.43\ldots,\]
approximately. Therefore,  there are no maximum modulus points on the positive real axis greater than $0.44$.
So, the points satisfying both \ref{Conda} and \ref{Condb} in the upper half-plane must lie on an unbounded curve in the upper half-plane asymptotically close to the line $y=x$.

Now, all the zeros of $f$ lie on the real axis. Since, 
\[|f(x+iy)| = e^x (\cos^2 x+\sinh^2y)^\frac{1}{2} \geq e^x \sinh y>1, \quad \text{for} ~ x>0 ~\text{and} ~ |y| \geq 1,\]
 the direct tract $D$ contains $\{x+iy: x>0, |y| \geq 1\}$.
 
Hence for $\tau>0$ there exists a disc of radius $r^{1-\tau}$ with center on the line $y=x$ inside a tract of $f$. Further, the critical values of $f$ grow like the exponential function; that is, the modulus of the critical values of $f$ are $\exp(\pi k+\pi/4)/\sqrt{2} $ for $k \in \mathbb{Z}$. So, there exists $\lambda$ and an annulus $A(r/\lambda, \lambda r)$ which contains no singular values of the restriction of $f$ to $D$ for arbitrarily large $r$. Hence the conditions of \Cref{Dim} are satisfied, so we deduce from \Cref{Dim} that $\dim J(f) \cap K(f) >1$.

\end{example}

Next, we give an example with  a direct tract with no logarithmic singularities.   This example is the reciprocal of the entire function studied in \cite{BE08}; for an illustration of the tracts of this function, see \cite[Figure 1]{BE08}. 

\begin{example}\label{example2}
Consider the entire function
\[f(z)=\exp(-g(z)), ~\text{where} ~g(z)=\sum_{k=1}^{\infty} \left(\frac{z}{2^k}\right)^{2^k}.\]
Then $\dim J(f) \cap K(f) >1$. 

We will use several key results about $f$ from \cite[p. 254-258]{BE08}. The first is the existence of an infinite tree on which $f$ is very large. 
Introducing some notation, fix $0<\varepsilon \leq 1/8$ and set $r_n=(1+\varepsilon)2^{n+1}$ and $ r_n' = (1-2\varepsilon) 2^{n+2}$ for $n \in \mathbb{N}$. Then, for $j \in \{0, 1, \ldots, 2^n-1\}$, let
\[B_{j,n}=\left\{ r \exp \left(\frac{\pi i}{2^n}+\frac{2\pi ij}{2^n}\right): r_n \leq r \leq r_n' \right \}\]
and
\[C_{j,n}^{\pm}=\left\{ r \exp \left(\frac{\pi i}{2^n}+\frac{2\pi ij}{2^n} \pm \frac{r-r'_n}{r_{n+1}-r_n'}\frac{\pi i}{2^{n+1}}\right): r'_n \leq r \leq r_{n+1} \right \}.\]
We then obtain an infinite binary tree
\[T= [-i r_1, ir_1] \cup \bigcup_{n=1}^\infty \bigcup_{j=0}^{2^n-1} (B_{j,n} \cup C_{j,n}^{\pm}).\]
If $n$ is large enough, then $\operatorname{Re} g(z) < -2^{2^n}$, for $z\in B_{j,n} \cup C_{j,n}^{\pm}$, $j = 0, 1, \ldots, 2^n-1$.

The second key result in \cite{BE08} is that if $r_n \leq |z| \leq r_n'$, then
\[g(z)=(1+\eta(z))\left(\frac{z}{2^n}\right)^{2^n},\]
where $\eta(z) \rightarrow 0$ as $n\rightarrow \infty$. 

Let $\rho_n=(1+2\varepsilon)2^{n+1}$ and $\rho_n'=(1-3\varepsilon)2^{n+2}$.  
The authors then show a third key result that 
\[\left|zg'(z)-2^ng(z)\right|<1/2 |g(z)|,\]
for $\rho_n \leq |z| \leq \rho_n'$.

The final key result we need is that for $\rho_n \leq r \leq \rho_n'$, we have that $\arg g(re^{i \theta})$ is an increasing function of $\theta$ and increases by $2^n 2 \pi$ as $\theta$ increases by $2 \pi$. The authors conclude that for $\rho_n \leq r \leq \rho_n'$ the circle $\{z: |z| = r\}$ contains exactly $2^n$ arcs where $\operatorname{Re} g(re^{i\theta}) < \log \rho$ for $r$ and $n$ sufficiently large that $\{z: |z| = r\}\cap D \neq \emptyset$ and $\log \rho> -2^{2^n}$.

Let $D$ be a direct tract of $f$ with boundary value $R>0$. As $f$ has no zeros, $D$ is simply connected. Further, from \cite[Example 3.3]{IDingtracts}, $f$ has a bounded set of asymptotic values, and we can assume they all lie in $D(0,R)$. Hence, it remains to check that the critical values of~$f$ are suitably well separated in order to satisfy condition \ref{Dim2} in \Cref{Dim}. We deduce this by using the key results above discussed in \cite{BE08}, as well as an application of Rouch\'e's theorem and properties of level curves.

First, from the third key result above $g'(z) \neq 0$ in $\overline{A(\rho_n,\rho_n')}$ for any $n$ and so any critical points of $f$ must lie in $\cup_{n\in \mathbb{N}} A(\rho'_{n-1},\rho_n)$. Now, since 
\[\left|zg'(z)-2^ng(z)\right|<1/2 |g(z)|,\]
for $\rho_n \leq |z| \leq \rho_n'$, by Rouch\'e's theorem $g(z)$ and $zg'(z)$ have the same number of zeros in $D(0,\rho_n)$. Further, $g(z)$ has $2^n$ zeros in $D(0,\rho_n)$ by the second key result and another application of Rouch\'e's theorem. Therefore, $f$ has exactly $2^{n}-1$ critical points in $D(0,\rho_n)$ and hence  $2^{n-1}$ critical points in $A(\rho'_{n-1},\rho_n)$, for $n\geq 1$. 
All such critical points of $f$, apart from $0$, must lie in components of $A(\rho'_{n-1},\rho_n)\cap D$, for some $n\geq 1$. From each such critical point there must originate at least $4$ unbounded level curves which go to $\infty$ through $A(\rho_n,\rho'_n)\cap D$. By the fourth key result above, exactly $2$ of these curves can pass through each component of $A(\rho_n,\rho'_n)\cap D$, and the unions of these level curves must meet the tree $T$. Hence, the modulus of the level curves must be at least the minimum modulus of $f$ on the tree $T$ in $ A(\rho'_{n-1},\rho_n)\cap D$. Therefore, the modulus of each of the critical values of the critical points in $A(\rho'_{n-1},\rho_n)\cap D$ is at least $\min \{ |f(z)|:z \in T, |z| = \rho'_{n-1}\}\geq \exp(2^{2^{n-1}})$.

Now, label the moduli of the critical values arising from critical points of $f$ in $D$ as
\[a_{j,n}=|f(\zeta_{j,n})|,\]
where $\zeta_{j,n}\in A(\rho'_{n-1},\rho_n)\cap D$, $j=0,1,\ldots 2^{n-1}-1$, and $n\geq 1$. So,
\[a_{j,n}\geq \exp(2^{2^{n-1}}), \quad \text{for}~ j=0,1,\ldots 2^{n-1}-1, ~\text{and}~ n\geq 1.\]
Then we claim that for any $k>1$ there exist arbitrarily large $r$ such that $A(r,kr)$ contains no $a_{j,n}$, for $ j=0,1,\ldots 2^{n-1}-1$, and  $n\geq 1$. For otherwise there would exist a subsequence ${a_{j_m,n_m} \in A(k^{m-1},k^m)}$, for all $m \geq M$, say. Hence,
\[\exp(2^{2^{m-1}}) \leq \exp(2^{2^{n_m-1}})\leq a_{j_m,n_m}\leq k^m, \] 
for all $m\geq M$, which is impossible.
%Now, let $a_n\geq\exp(2^{2^{n-1}})$ be the modulus of the critical value of some critical point with modulus $r$ such that $\rho'_{n-1} \leq r \leq \rho_{n}$, for $n \in \mathbb{N}$ sufficiently large. Then, we claim that for any $k>1$, there exist arbitrarily large $r$ such that $A(r,kr)\cap \{a_n\}\neq\emptyset$. Otherwise, there would exist a subsequence $(a_{n_m})$ such that $a_{n_m} \in (k^{m-1},k^m)$ for all $m\geq M$, say. So, 
%\[\exp(2^{2^{m-1}}) \leq \exp(2^{2^{n_m-1}})\leq a_{n_m}\leq k^m, \] 
%for all $m\geq M$, which is impossible.
%Suppose there exists $n$ such that $\{a_n\} \in A(m,km)$ for all $m \geq M$. Then, $a_n\leq km$, and so $\frac{a_n}{r_n} \leq \frac{km}{r_n}$. Let $m=r$, then $\frac{a_n}{r_n}\rightarrow \infty$, but $\frac{km}{r_n}=\frac{kr}{r_n} \not\rightarrow \infty$, which gives a contradiction. Hence, $A(r,kr)\cap \{a_n\}=\emptyset$ for arbitrarily large $r$, by choosing $M$ to be arbitrarily large.  So, there exists an annulus satisfying \ref{Dim2} for arbitrarily large $r$. 
Therefore, we may apply \Cref{Dim} and obtain that $\dim J(f) \cap K(f) >1$.

\end{example}

\begin{remark}
Bergweiler and Karpi\'{n}ska \cite[Theorem 1.1]{BK10} show that if $f$ is a transcendental entire function that satisfies a certain regularity condition, then $\dim J(f) \cap I(f)=2$. This result applies to our \Cref{example1}, but not to \Cref{example2} since the regularity condition in \cite{BK10}  implies that $f$ has finite order \cite[p. 533]{BK10}, which is not the case for \Cref{example2} by a theorem of P\'olya \cite[Theorem 2.9]{HaymanMero}.
\end{remark}
\subsection*{Acknowledgements}
The author would like to thank his supervisors Gwyneth Stallard and Phil Rippon for their guidance and help in the preparation of this paper. The author also thanks Walter Bergweiler for a helpful conversation about direct tracts.
\bibliographystyle{abbrv}
\bibliography{mybib}{}
\end{document}